%-----------------------------------------------------------------------
% Beginning of tran-l-template.tex
%-----------------------------------------------------------------------
%
%     This is a topmatter template file for TRAN for use with AMS-LaTeX.
%
%     Templates for various common text, math and figure elements are
%     given following the \end{document} line.
%
%%%%%%%%%%%%%%%%%%%%%%%%%%%%%%%%%%%%%%%%%%%%%%%%%%%%%%%%%%%%%%%%%%%%%%%%

%     Remove any commented or uncommented macros you do not use.

\documentclass{amsart}
\usepackage{amssymb}
\copyrightinfo{2013}{V.X. Genest \emph{et al.}}

\newtheorem{theorem}{Theorem}
\newtheorem{proposition}{Proposition}
\newtheorem{lemma}{Lemma}

\theoremstyle{definition}

\theoremstyle{remark}
\newtheorem{remark}[theorem]{Remark}

\renewcommand{\tan}{\mathrm{tg}\,}

\newcommand{\under}[1]{_{#1}}%For references section%
%\numberwithin{equation}{section}

\begin{document}

% \title[short text for running head]{full title}
\title{A Laplace-Dunkl equation on $S^2$ and the Bannai--Ito algebra}
\author[V.X. Genest]{Vincent X. genest}
\address{Centre de Recherches Math\'ematiques, Universit\'e de Montr\'eal, P.O. Box 6128, Centre-ville Station, Montr\'eal (Qu\'ebec) H3C 3J7}
\email{genestvi@crm.umontreal.ca}
\author[L. Vinet]{Luc Vinet}
\address{Centre de Recherches Math\'ematiques, Universit\'e de Montr\'eal, P.O. Box 6128, Centre-ville Station, Montr\'eal (Qu\'ebec) H3C 3J7}
\email{vinetl@crm.umontreal.ca}
\author[A. Zhedanov]{Alexei Zhedanov}
\address{Donetsk Institute for Physics and Technology, Donetsk 83114, Ukraine}
\email{zhedanov@kinetic.ac.donetsk.ua}
\subjclass[2010]{43A99, 43A90, 16T05, 33C45, 33C80}
\date{}
\dedicatory{}

%    Abstract is required.
\begin{abstract}
The analysis of the $\mathbb{Z}_2^{3}$ Laplace-Dunkl equation on the $2$-sphere is cast in the framework of the Racah problem for the Hopf algebra $sl_{-1}(2)$. The related Dunkl-Laplace operator is shown to correspond to a quadratic expression in the total Casimir operator of the tensor product of three irreducible $sl_{-1}(2)$-modules. The operators commuting with the Dunkl Laplacian are seen to coincide with the intermediate Casimir operators and to realize a central extension of the Bannai--Ito (BI) algebra. Functions on $S^2$ spanning irreducible modules of the BI algebra are constructed and given explicitly in terms of Jacobi polynomials. The BI polynomials occur as expansion coefficients between two such bases composed of functions separated in different coordinate systems.
\end{abstract}

\maketitle
\section{Introduction}
The purpose of this paper is to establish a relation between Dunkl harmonic analysis on the 2-sphere and the representation theory of $sl_{-1}(2)$, an algebra obtained as a $q\rightarrow -1$ limit of the quantum algebra $\mathcal{U}_{q}(\mathfrak{sl}_2)$. The Dunkl-Laplace operator on $S^2$ associated to the Abelian reflection group $\mathbb{Z}_2^{3}\cong \mathbb{Z}_2\times \mathbb{Z}_2\times \mathbb{Z}_2$ will be expressed as a quadratic polynomial in the total Casimir operator of the tensor product of three irreducible $sl_{-1}(2)$-modules. The operators commuting with the Dunkl Laplacian will be identified with the intermediate Casimir operators arising in the three-fold tensor product. On eigensubspaces of the Dunkl Laplacian, these intermediate Casimir operators will be shown to generate the Bannai--Ito algebra, which is the algebraic structure behind the Racah problem of $sl_{-1}(2)$. Functions on the 2-sphere providing bases for irreducible modules of the Bannai--Ito algebra will be constructed. It will be shown that the Bannai--Ito polynomials arise here as expansion coefficients between elements of such bases associated to the separation of variables in different spherical coordinate systems.

We first provide background on the entities involved here:  the $\mathbb{Z}_{2}^{3}$ Dunkl Laplacian and its restriction to the 2-sphere, the $sl_{-1}(2)$ algebra and its Hopf algebra structure and the Bannai--Ito algebra and the associated Bannai--Ito polynomials.
\subsection{The $\mathbb{Z}_2^3$ Dunkl-Laplacian on $S^2$}
The Dunkl operators and Laplacian were introduced by Dunkl in \cite{Dunkl-1988,Dunkl-1989-01}, where a framework for multivariate analysis based on finite reflection groups was developed. These operators have since found a vast number of applications in diverse fields including harmonic analysis and integral transforms \cite{DeBie-2011,Opdam-1998,Rosler-2003}, orthogonal polynomials and special functions \cite{Dunkl-2001}, stochastic processes \cite{Rosler-2008} and quantum integrable/superintegrable systems \cite{Genest-2013-04,CMS-2000}. In the case of the Abelian reflection group $\mathbb{Z}_2^{3}$, the Dunkl operators $\mathcal{D}_i$, $i=1,2,3$, associated to each copy of the reflection group $\mathbb{Z}_2$ are defined by
\begin{align}
\label{Dunkl-D}
\mathcal{D}_i=\partial_{x_i}+\frac{\mu_i}{x_i}(1-R_i),
\end{align}
with $\mu_i>-1/2$ a real parameter, $\partial_{x_i}$ the partial derivative with respect to the variable $x_i$ and $R_i$ the reflection operator in the $x_i=0$ plane, i.e. $R_if(x_i)=f(-x_i)$. The Dunkl Laplacian associated to the $\mathbb{Z}_2^{3}$ group is defined by
\begin{align}
\label{Dunkl-Laplacian}
\Delta=\mathcal{D}_1^2+\mathcal{D}_2^2+\mathcal{D}_3^2,
\end{align}
and has the following expression:
\begin{align*}
\Delta=\sum_{i=1}^{3}\partial_{x_i}^2+\frac{2\mu_i}{x_i}\partial_{x_i}-\frac{\mu_i}{x_i^2}(1-R_{i}).
\end{align*}
Since the reflections $R_i$, $i=1,2,3$, are special rotations in $O(3)$, the Dunkl Laplacian \eqref{Dunkl-Laplacian}, like the standard Laplace operator in three variables, separates in the usual spherical coordinates
\begin{align}
\label{Spherical-Coordinates}
x_1=r\sin\theta\cos \phi,\quad x_2=r\sin\theta\sin\phi,\quad x_3=r\cos\theta,
\end{align}
with $0\leq \theta \leq \pi$ and $0\leq \phi \leq 2\pi$. The operator $\Delta$ can thus be restricted to functions defined on the unit sphere. Let $\Delta_{S^{2}}$ denote the angular part of the Dunkl Laplacian \eqref{Dunkl-Laplacian}; one has
\begin{align}
\label{Dunkl-Laplacian-S2}
\Delta_{S^2}=L_{\theta}+\frac{1}{\sin^{2}\theta}M_{\phi},
\end{align}
where
\begin{align*}
L_{\theta}=\frac{1}{\sin\theta}\partial_{\theta}\,(\sin\theta\,\partial_{\theta})+2\left(\frac{\mu_1+\mu_2}{\tan\theta}-\mu_3\,\tan \theta\right)\partial_{\theta}-\frac{\mu_3}{\cos^2\theta}(1-R_{3}),
\end{align*}
and
\begin{align*}
M_{\phi}=\partial_{\phi}^2+2\left(\frac{\mu_2}{\tan \phi}-\mu_1\,\tan\phi\right)\partial_{\phi}-\frac{\mu_1}{\cos^2\phi}(1-R_1)-\frac{\mu_2}{\sin^2\phi}(1-R_2),
\end{align*}
as can be directly checked by expanding \eqref{Dunkl-Laplacian} in spherical coordinates.
\subsection{The Hopf algebra $\mathbf{sl_{-1}(2)}$} 
The $sl_{-1}(2)$ algebra was introduced in \cite{Tsujimoto-2011-10} as the $q\rightarrow -1$ limit of the quantum algebra $\mathcal{U}_{q}(\mathfrak{sl}_2)$ \cite{Vilenkin-1991}. It is defined as the associative algebra (over $\mathbb{C}$) with generators $A_{\pm}$, $A_0$ and $P$ satisfying the relations
\begin{align}
\label{SL}
[A_0,A_{\pm}]=\pm A_{\pm},\; [A_0,P]=0,\; \{A_{+},A_{-}\}=2A_0,\;\{A_{\pm},P\}=0,\; P^2=1,
\end{align}
where $[x,y]=xy-yx$ stands for the commutator. This algebra admits the following Casimir operator, which commutes with all generators:
\begin{align}
\label{Casimir}
C=A_{+}A_{-}P-A_{0}P+P/2.
\end{align}
The $sl_{-1}(2)$ algebra can be endowed with the structure of a Hopf algebra. One introduces the comultiplication $\Delta:sl_{-1}(2)\rightarrow sl_{-1}(2)\otimes sl_{-1}(2)$, the counit $\epsilon: sl_{-1}(2)\rightarrow \mathbb{C}$ and the coinverse (antipode) $\sigma:sl_{-1}(2)\rightarrow sl_{-1}(2)$ defined by the formulas 
\begin{gather}
\Delta(A_0)=A_0\otimes 1+1 \otimes A_0,\quad \Delta(A_{\pm})=A_{\pm}\otimes P+1\otimes A_{\pm},\quad \Delta(P)=P\otimes P,
\nonumber
\\
\label{CoProd}
\epsilon(1)=\epsilon(P)=1,\qquad \epsilon(A_{\pm})=\epsilon(A_0)=0,
\\
\sigma(1)=1,\quad \sigma(P)=P,\quad \sigma(A_0)=-A_0,\quad \sigma(A_{\pm})=PA_{\pm}.
\nonumber
\end{gather}
It is verified that the definitions \eqref{CoProd} comply with the conditions required for a Hopf algebra \cite{Underwood-2011}. It is worth pointing out that the operators $A_{\pm}$, $A_0$ also satisfy the defining relations of the parabosonic algebra for a single paraboson (see \cite{Daska-2000-02}).
\subsection{The Bannai--Ito algebra and polynomials}
The Bannai--Ito algebra was introduced in \cite{Tsujimoto-2012-03} as the algebraic structure encoding the bispectrality property of the Bannai--Ito polynomials. It is defined as the associative algebra (over $\mathbb{C}$) generated by $K_1$, $K_2$ and $K_3$ satisfying the relations
\begin{align}
\label{BI-Algebra}
\{K_1,K_2\}=K_3+\alpha_3,\quad \{K_2,K_3\}=K_1+\alpha_1,\quad \{K_3,K_1\}=K_2+\alpha_2,
\end{align}
where $\{x,y\}=xy+yx$ stands for the anticommutator and where $\alpha_i$, $i=1,2,3$, are real structure constants. In \cite{Tsujimoto-2012-03}, the algebra was introduced with the structure constants expressed as follows in terms of four real parameters $\rho_1$, $\rho_2$, $r_1$, $r_2$:
\begin{align*}
\alpha_1=4(\rho_1\rho_2+r_1r_2),\quad \alpha_2=2(\rho_1^2+\rho^2-r_1^2-r_2^2),\quad \alpha_3=4(\rho_1\rho_2-r_1r_2),
\end{align*}
and the generators had the form
\begin{align*}
K_1=2\mathcal{L}+(g+1/2),\quad K_2=y,
\end{align*}
with $g=\rho_1+\rho_2-r_1-r_2$ and $\mathcal{L}$ the difference operator
\begin{align*}
\mathcal{L}=\frac{(y-\rho_1)(y-\rho_2)}{2y}(1-R_{y})+\frac{(y-r_1+1/2)(y-r_2+1/2)}{2y+1}(T^{+}_yR_y-1),
\end{align*}
where $R_y f(y)=f(-y)$, $T^{+}_{y}f(y)=f(y+1)$. The operator $\mathcal{L}$ is the most general self-adjoint first order difference operator with reflections that stabilizes the space of polynomials of a given degree. As shown in \cite{Tsujimoto-2012-03}, the operator $\mathcal{L}$ admits as eigenfunctions the Bannai--Ito polynomials $B_{n}(y)$, which were introduced in a combinatorial context by Bannai and Ito in \cite{Bannai-1984}. Their three-term recurrence relation was derived in \cite{Tsujimoto-2012-03} using the BI algebra \eqref{BI-Algebra} and reads
\begin{align}
\label{BI-Recurrence}
x B_{n}(y)= B_{n+1}(y)+(\rho_1-A_{n}-C_{n})B_{n}(y)+A_{n-1}C_{n}B_{n-1}(y),
\end{align}
where the initial conditions $B_{-1}(x)=0$, $B_{0}(x)=1$ hold and where the recurrence coefficients $A_n$, $C_n$ are given by
\begin{subequations}
\label{Recu-Coeff}
\begin{align}
A_{n}&=
\begin{cases}
\frac{(n+2\rho_1-2r_1+1)(n+2\rho_1-2r_2+1)}{4(n+\rho_1+\rho_2-r_1-r_2+1)}, & \text{$n$ is even},
\\
\frac{(n+2\rho_1+2\rho_2-2r_1-2r_2+1)(n+2\rho_1+2\rho_2+1)}{4(n+\rho_1+\rho_2-r_1-r_2+1)}, & \text{$n$ is odd},
\end{cases}
\\
C_{n}&=
\begin{cases}
-\frac{n(n-2r_1-2r_2)}{4(n+\rho_1+\rho_2-r_1-r_2)}, & \text{$n$ is even},
\\
-\frac{(n+2\rho_2-2r_2)(n+2\rho_2-2r_1)}{4(n+\rho_1+\rho_2-r_1-r_2)}, & \text{$n$ is odd}.
\end{cases}
\end{align}
\end{subequations}
The polynomials $B_n(y)$ defined by \eqref{BI-Recurrence} are $q\rightarrow -1$ limits of either the Askey-Wilson \cite{Tsujimoto-2012-03} or the $q$-Racah polynomials \cite{Bannai-1984}. They obey a discrete and finite orthogonality relation of the form
\begin{align*}
\sum_{s=0}^{N}w_{s}B_{n}(y_s)B_{m}(y_s)=h_{n}\delta_{nm},
\end{align*}
where the expressions for the grid points $y_s$, the measure $w_{s}$ and the normalization constant $h_n$ depend on a set of relations between the parameters. For the complete picture, one may consult the references \cite{Genest-2013-02-1,Tsujimoto-2012-03}.
\subsection{Outline}
Here is an outline of the paper.
\begin{itemize}
\item Section II: Irreducible $sl_{-1}(2)$-modules (positive-discrete series), Realization with Dunkl operators, Racah problem, Intermediate Casimir operators, Relation between the total Casimir and $\Delta_{S^2}$, Spectra of the total and intermediate Casimir operators
\item Section III: Commutant of $\Delta_{S^2}$, Bannai--Ito algebra, Finite-dimensional irreducible representations of the BI algebra
\item Section IV: Dunkl spherical harmonics for $\mathbb{Z}_2^{3}$, $S^2$ basis functions for irreducible modules of the BI algebra, BI polynomials as expansion coefficients between basis functions
\end{itemize}
\section{Racah problem of $sl_{-1}(2)$ and $\Delta_{S^2}$}
In this section, irreducible $sl_{-1}(2)$-modules of the positive-discrete series and their realizations in terms of the Dunkl operators \eqref{Dunkl-D} are given. The Racah problem is presented and the intermediate and total Casimir operators are defined. The main result on the relation between the total Casimir operator and the Dunkl Laplacian on $S^2$ is presented. Moreover, the spectrum of the Dunkl Laplacian is recovered algebraically using this relation.
\subsection{Representations of the positive-discrete series and their realization in terms of Dunkl operators}
Let $\epsilon$ and $\nu$ be real parameters such that $\epsilon^2=1$ and $\nu>-1/2$ and denote by $V^{(\epsilon,\nu)}$ the infinite-dimensional vector space spanned by the orthonormal basis vectors $e_{n}^{(\epsilon,\nu)}$ with $n$ a non-negative integer. An irreducible $sl_{-1}(2)$-module of the positive-discrete series is obtained by endowing $V^{(\epsilon,\nu)}$ with the actions \cite{Tsujimoto-2011-10}:
\begin{subequations}
\label{Representations}
\begin{gather}
A_0\,e_{n}^{(\epsilon,\nu)}=(n+\nu+1/2)\,e_{n}^{(\epsilon,\nu)},\quad P\,e_{n}^{(\epsilon,\nu)}=\epsilon (-1)^{n}\,e_{n}^{(\epsilon,\nu)},
\\
A_{+}\,e_{n}^{(\epsilon,\nu)}=\sqrt{[n+1]_{\nu}}\,e_{n+1}^{(\epsilon,\nu)},\quad A_{-}\,e_{n}^{(\epsilon,\nu)}=\sqrt{[n]_{\nu}}\,e_{n-1}^{(\epsilon,\nu)},
\end{gather}
\end{subequations}
where $[n]_{\nu}$ is defined by
\begin{align*}
[n]_{\nu}=n+\nu(1-(-1)^{n}).
\end{align*}
It is directly seen that for $\nu>-1/2$, $V^{(\epsilon,\mu)}$ is an irreducible module. Furthermore, it is observed that on this module the spectrum of $A_0$ is strictly positive and the operators $A_{\pm}$ are adjoint one of the other. As expected from Schur's lemma, the Casimir operator \eqref{Casimir} of $sl_{-1}(2)$ acts a multiple of the identity on $V^{(\epsilon,\nu)}$:
\begin{align}
\label{Ultra}
C\,e_{n}^{(\epsilon,\nu)}=-\epsilon\,\nu \,e_{n}^{(\epsilon,\nu)}.
\end{align}
The $sl_{-1}(2)$-module $V^{(\epsilon,\nu)}$ can be realized using Dunkl operators. Indeed, for each variable $x_i$, $i=1,2,3$, one can check that the operators
\begin{align}
\label{Dunkl-Realization}
A_0^{(i)}=-\frac{1}{2}\mathcal{D}_i^2+\frac{1}{2}x_i^2,\quad A_{\pm}^{(i)}=\frac{1}{\sqrt{2}}(x_i\mp \mathcal{D}_i), \quad P^{(i)}=R_i,
\end{align}
where $\mathcal{D}_i$ and $R_i$ are as in \eqref{Dunkl-D}, satisfy the defining relations \eqref{SL} of $sl_{-1}(2)$. The Casimir operator $C^{(i)}$ becomes
\begin{align}
\label{Initial}
C^{(i)}=A_{+}^{(i)}A_{-}^{(i)}P^{(i)}-A_0^{(i)}P^{(i)}+P^{(i)}/2=-\mu_i,
\end{align}
and hence the operators \eqref{Dunkl-Realization} for $i=1,2,3$ realize the irreducible module $V^{(\epsilon,\nu)}$ with $\epsilon=\epsilon_i=1$ and $\nu=\mu_i$. The orthonormal basis vectors $e_{n}^{(\epsilon_i,\nu_i)}(x_i)$ in this realization are expressed in terms of the generalized Hermite polynomials (see for example \cite{Genest-2013-04,Rosenblum-1994}) and the space $V^{(\epsilon_i,\nu_i)}$ with $\epsilon_i=1$ and $\nu_i=\mu_i$ is the $L^2$ space of square integrable functions of argument $x_i$ with respect to the orthogonality measure of the generalized Hermite polynomials \cite{Rosenblum-1994}; we shall denote it by $L^{2}_{\mu_i}$.
\subsection{The Racah problem, Casimir operators and $\Delta_{S^2}$}
The Racah problem for $sl_{-1}(2)$-modules of the positive-discrete series arises when the decomposition in irreducible components of the module $V=V^{(\epsilon_1,\nu_1)}\otimes V^{(\epsilon_2,\nu_2)}\otimes V^{(\epsilon_3,\nu_3)}$ is considered. The action of the $sl_{-1}(2)$ generators on $V$ is prescribed by the coproduct structure \eqref{CoProd} and one has for $v\in V$
\begin{gather}
\label{Tensor}
A_0v=(1\otimes \Delta)\Delta(A_0)v,\quad Pv=(1\otimes \Delta)\Delta(P)v,\quad A_{\pm}v=(1\otimes \Delta)\Delta(A_{\pm})v.
\end{gather}
Note that $(1\otimes \Delta)\Delta=(\Delta\otimes 1)\Delta$ since $\Delta$ is coassociative. In the realization \eqref{Dunkl-Realization}, the module $V$ (with $\epsilon_i=1$ and $\nu_i=\mu_i$) involves functions of the three independent variables $x_1$, $x_2$, $x_3$. The operators satisfying the $sl_{-1}(2)$ relations and acting on functions $f(x_1,x_2,x_3)$ in $L^2_{\mu_1}\otimes L^2_{\mu_2}\otimes L^{2}_{\mu_3}$ are obtained from \eqref{Dunkl-Realization} and \eqref{Tensor}:
\begin{gather}
\label{Full-Algebra}
\begin{aligned}
\widetilde{A}_0=A_0^{(1)}+A_0^{(2)}+A_0^{(3)},\quad \widetilde{P}=P^{(1)}P^{(2)}P^{(3)},
\\
\widetilde{A}_{\pm}=A_{\pm}^{(1)}P^{(2)}P^{(3)}+A_{\pm}^{(2)}P^{(3)}+A_{\pm}^{(3)}.
\end{aligned}
\end{gather}
In combining the modules $V^{(\epsilon_i,\nu_i)}$, $i=1,2,3$, three types of Casimir operators can be distinguished. The three 
\emph{initial} Casimir operators are those attached to each components $V^{(\epsilon_i,\mu_i)}$ of $V$ and act as multiplication by $-\epsilon_i\nu_i$ as per \eqref{Ultra}. In the realization \eqref{Dunkl-Realization}, these are the $C^{(i)}$ given in \eqref{Initial}. The two \emph{intermediate} Casimir operators are associated to the two equivalent factorizations
\begin{align}
\label{Decomposition}
V=(V^{(\epsilon_1,\nu_1)}\otimes V^{(\epsilon_2,\nu_2)})\otimes V^{(\epsilon_3,\nu_3)}=V^{(\epsilon_1,\nu_1)}\otimes(V^{(\epsilon_2,\nu_2)}\otimes V^{(\epsilon_3,\nu_3)}),
\end{align}
and correspond to the operators
\begin{align}
\label{Inter-Abs}
\Delta(C)\otimes 1\qquad \text{and}\qquad 1\otimes \Delta(C),
\end{align}
where $\Delta(C)$ is obtained from \eqref{Casimir} and \eqref{CoProd}. In the realization \eqref{Dunkl-Realization}, these shall be denoted $C^{(ij)}$ with $(ij)=(12),(23)$ and are given by
\begin{align}
\label{Intermediate}
\begin{aligned}
C^{(ij)}&=(A_{+}^{(i)}P^{(i)}+A_{+}^{(j)}P^{(j)})(A_{-}^{(i)}P^{(i)}+A_{-}^{(j)}P^{(j)})
\\
&\quad -(A_0^{(i)}+A_0^{(j)})P^{(i)}P^{(j)}+P^{(i)}P^{(j)}.
\end{aligned}
\end{align}
The \emph{total} Casimir operator is connected to the whole module $V$ and is of the form $(1\otimes \Delta)\Delta(C)$. In the realization \eqref{Dunkl-Realization}, the total Casimir is denoted $\widetilde{C}$ and reads
\begin{align}
\label{Total}
\widetilde{C}=\widetilde{A}_{+}\widetilde{A}_{-}\widetilde{P}-\widetilde{A}_0\widetilde{P}+\widetilde{P}/2.
\end{align}
with $\widetilde{A}_0$, $\widetilde{A}_{\pm}$ and $\widetilde{P}$ given by \eqref{Full-Algebra}. Note that $\widetilde{C}$ does not act as a multiple of the identity on $V$ since in general $V$ is not irreducible.
\begin{remark}
By construction, the total Casimir operator $\widetilde{C}$ commutes with both the initial and intermediate Casimir operators. Moreover, it is obvious that the two intermediate Casimir operators commute with the initial Casimir operators, but do not commute amongst themselves.
\end{remark}
We now relate the total Casimir operator $\widetilde{C}$ to the Dunkl Laplacian operator $\Delta_{S^2}$ on the 2-sphere.
\begin{proposition}
Let $\Omega$ be the following element:
\begin{align}
\label{Def-Omega}
\Omega=\widetilde{C}\widetilde{P},
\end{align}
where $\widetilde{C}$ and $\widetilde{P}$ are respectively given by \eqref{Full-Algebra} and \eqref{Total} in the realization \eqref{Dunkl-Realization}. One has
\begin{align}
\label{Main-Result}
-\Delta_{S^2}=\Omega^2+\Omega-(\mu_1+\mu_2+\mu_3)(\mu_1+\mu_2+\mu_3+1).
\end{align}
\end{proposition}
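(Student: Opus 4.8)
The plan is to peel the reflection $\widetilde P$ off $\Omega$, reduce $\Omega$ to a purely angular (tangential) operator built from Dunkl angular momenta, and then recognize its square as $\Delta_{S^2}$. First I would exploit that each $P^{(i)}=R_i$ is an involution, so $\widetilde P^2=1$, and that the total Casimir $\widetilde C$ commutes with $\widetilde P$. Inserting \eqref{Total} into \eqref{Def-Omega} then collapses the trailing reflections and yields the compact form $\Omega=\widetilde A_+\widetilde A_- -\widetilde A_0+\tfrac12$, which already removes one layer of bookkeeping.

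Next I would expand $\widetilde A_+\widetilde A_-$ from \eqref{Full-Algebra}, tracking the anticommutations $\{A_\pm^{(i)},P^{(i)}\}=0$ and using that factors with distinct indices commute. The nine resulting terms separate into three \emph{diagonal} ones ($i=j$), in which the trailing reflections square to the identity and leave $\sum_i A_+^{(i)}A_-^{(i)}$, and six \emph{off-diagonal} ones ($i\neq j$), each dressed by a reflection. For the diagonal part I would use the single-variable Dunkl relation $[\mathcal D_i,x_i]=1+2\mu_i R_i$, giving $A_+^{(i)}A_-^{(i)}=A_0^{(i)}-\tfrac12-\mu_i R_i$; summing over $i$ yields $\widetilde A_0-\tfrac32-\sum_i\mu_iR_i$, whose $\widetilde A_0$ cancels against the $-\widetilde A_0$ in $\Omega$ and thereby removes all radial (oscillator) dependence. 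The off-diagonal terms recombine pairwise into the Dunkl angular momenta $L_{ij}=x_i\mathcal D_j-x_j\mathcal D_i$, leaving
\begin{equation*}
\Omega=-1-\sum_{i=1}^{3}\mu_i R_i+L_{12}R_2+L_{13}R_2R_3+L_{23}R_3 .
\end{equation*}
Since each $L_{ij}$ annihilates radial functions and each $R_i$ acts only on the angles, $\Omega$ is manifestly tangential to $S^2$; evaluating it on the constant function (all $L_{ij}$ vanish, $R_i\to1$) gives $\Omega\cdot 1=-(1+\mu_1+\mu_2+\mu_3)$, which already fixes the additive constant in \eqref{Main-Result}.

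The last and hardest step is to form $\Omega^2+\Omega$ and identify it with $-\Delta_{S^2}$. Squaring the displayed expression and using $\{R_i,L_{ij}\}=0$ together with $[R_k,L_{ij}]=0$ for $k\notin\{i,j\}$, the pure angular-momentum part produces $-\sum_{i<j}L_{ij}^2$, the expected leading piece of the angular Laplacian. I expect the main obstacle to be the reassembly of everything else: the cross terms $L_{ij}L_{jk}$ (whose commutators close on reflection-dressed first-order operators), the products $\mu_i\mu_j R_iR_j$ coming from $\bigl(\sum_i\mu_iR_i\bigr)^2$, and the mixed terms must all conspire, against the linear term $\Omega$ and the constant $(\mu_1+\mu_2+\mu_3)(\mu_1+\mu_2+\mu_3+1)$, so that the double-reflection pieces $R_iR_j$ cancel and only the single $(1-R_i)$ terms and the second-order derivatives of \eqref{Dunkl-Laplacian-S2} survive. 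The cleanest way to discharge this bookkeeping is to pass to the spherical coordinates \eqref{Spherical-Coordinates}, in which $\Omega$ becomes an explicit first-order operator in $\theta,\phi$ carrying the reflections $R_1,R_2,R_3$, and then to compute $\Omega^2+\Omega$ and compare it term by term with $L_{\theta}+\frac{1}{\sin^2\theta}M_{\phi}$ as given in \eqref{Dunkl-Laplacian-S2}.
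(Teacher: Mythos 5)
Your route is in substance the paper's own: the proof there consists precisely in expanding the total Casimir operator \eqref{Total} in the realization \eqref{Dunkl-Realization} and writing the result in the coordinates \eqref{Spherical-Coordinates}, which is exactly your concluding step, and your preliminary reductions --- $\Omega=\widetilde{A}_{+}\widetilde{A}_{-}-\widetilde{A}_0+\tfrac12$ from $\widetilde{P}^2=1$, the diagonal identity $A_{+}^{(i)}A_{-}^{(i)}=A_0^{(i)}-\tfrac12-\mu_iR_i$ following from $[\mathcal{D}_i,x_i]=1+2\mu_iR_i$, and the cancellation of $\widetilde{A}_0$ --- are all correct and usefully organize that computation. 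However, your displayed formula for $\Omega$ carries the wrong sign on all three reflection-dressed angular momenta. With the ordering fixed by \eqref{Full-Algebra}, the $(1,2)$ cross terms of $\widetilde{A}_{+}\widetilde{A}_{-}$ are
\begin{align*}
A_{+}^{(1)}P^{(2)}P^{(3)}A_{-}^{(2)}P^{(3)}+A_{+}^{(2)}P^{(3)}A_{-}^{(1)}P^{(2)}P^{(3)}
=\bigl(A_{+}^{(2)}A_{-}^{(1)}-A_{+}^{(1)}A_{-}^{(2)}\bigr)R_2
=(x_2\mathcal{D}_1-x_1\mathcal{D}_2)R_2=-L_{12}R_2
\end{align*}
in your convention $L_{ij}=x_i\mathcal{D}_j-x_j\mathcal{D}_i$, and similarly for the other two pairs, so that
\begin{align*}
\Omega=-1-\sum_{i=1}^{3}\mu_iR_i-\bigl(L_{12}R_2+L_{13}R_2R_3+L_{23}R_3\bigr),
\end{align*}
which is consistent with the paper's relation $\Omega=-K_1R_2R_3-K_2R_1R_3-K_3R_1R_2+\mu_1R_1+\mu_2R_2+\mu_3R_3+1/2$ combined with \eqref{BI-Generators}, but opposite in sign to your $+\Lambda$ version on the $L$-terms.

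This is not a cosmetic slip: both of your sanity checks are blind to it (constants and radial functions are annihilated by $\Lambda:=L_{12}R_2+L_{13}R_2R_3+L_{23}R_3$, and the squares $(L_{ij}\cdot\text{reflection})^2=-L_{ij}^2$ are sign-insensitive), but the first-order part of $\Omega^2+\Omega$ is not. Writing $M=-1-\sum_i\mu_iR_i$, one finds
\begin{align*}
\bigl[(M+\Lambda)^2+(M+\Lambda)\bigr]-\bigl[(M-\Lambda)^2+(M-\Lambda)\bigr]
=2\{M,\Lambda\}+2\Lambda
=-2\Lambda-4\bigl(\mu_1L_{23}R_1R_3+\mu_2L_{13}R_3+\mu_3L_{12}R_2R_3\bigr),
\end{align*}
a nonvanishing first-order operator, so \eqref{Main-Result} fails for your version of $\Omega$. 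A concrete witness at $\mu_1=\mu_2=\mu_3=0$: on the degree-one spherical harmonics $\mathrm{span}\{x_1,x_2,x_3\}$ one computes $\Lambda x_1=-x_2-x_3$ (cyclically), so $\Lambda$ has eigenvalues $-2,1,1$; the correct $\Omega=-1-\Lambda$ then has eigenvalues $1,-2,-2$ and $\Omega^2+\Omega=2=-\delta_1$ throughout, whereas your $-1+\Lambda$ has eigenvalues $-3,0,0$ and gives $6$ or $0$. Once the sign is corrected, the rest of your plan --- passing to \eqref{Spherical-Coordinates} and comparing $\Omega^2+\Omega$ term by term with \eqref{Dunkl-Laplacian-S2} --- is exactly the verification the paper performs, and it goes through.
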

\begin{proof}
The relation is obtained by expanding the total Casimir operator \eqref{Total} using \eqref{Dunkl-Realization} and by writing the resulting operator in the coordinates \eqref{Spherical-Coordinates}.
\end{proof}
The fact that $\Omega$ is a purely angular operator can be understood algebraically as follows. Consider the element $\widetilde{X}$ defined by
\begin{align*}
\widetilde{X}=\frac{1}{\sqrt{2}}\left(\widetilde{A}_{+}+\widetilde{A}_{-}\right).
\end{align*}
It is directly checked that $\widetilde{X}$ anticommutes with $\Omega$, that is $\{\Omega,\widetilde{X}\}=0$. It thus follows that $\widetilde{X}^2$ commutes with $\Omega$. Using the expressions \eqref{Full-Algebra} for the operators $\widetilde{A}_{\pm}$ in the realization \eqref{Dunkl-Realization}, it is easily seen that
\begin{align*}
\widetilde{X}^2=x_1^2+x_2^2+x_3^2.
\end{align*}
Hence $\Omega$ commutes with the ``radius'' operator, which means that it can only be an angular operator.
\subsection{Spectrum of $\Delta_{S^2}$ from the Racah problem}
The relation \eqref{Main-Result} can be exploited to algebraically derive the spectrum of $\Delta_{S^2}$ from that of $\Omega$ using the eigenvalues of the intermediate Casimir operators. In view of \eqref{Inter-Abs}, these eigenvalues can be found from those of $\Delta(C)$ on $V^{(\epsilon_i,\nu_i)}\otimes V^{(\epsilon_j,\nu_j)}$ (see also \cite{Genest-2012,Genest-2013-02,Tsujimoto-2011-10} where this problem was considered). Upon examining the action of $\Delta(A_0)$ on the direct product basis, one obtains using \eqref{Representations} the following direct sum decomposition of $V^{(\epsilon_i,\nu_i)}\otimes V^{(\epsilon_j,\nu_j)}$ has a vector space:
\begin{align*}
V^{(\epsilon_i,\nu_i)}\otimes V^{(\epsilon_j,\nu_j)}=\bigoplus_{n=0}^{\infty}U_{n},
\end{align*}
where $U_{n}$ are the $(n+1)$-dimensional eigenspaces of $\Delta(A_0)$ with eigenvalue $n+\nu_i+\nu_j+1$. Since $\Delta(C)$ commutes with $\Delta(A_0)$, the action of $\Delta(C)$ stabilizes $U_{n}$.
\begin{lemma}
The eigenvalues $\lambda_{I}$ of $\Delta(C)$ on $U_{n}$ are given by
\begin{align*}
\lambda_{I}(k)=(-1)^{k+1}\epsilon_i\epsilon_j(k+\nu_i+\nu_j+1/2),\qquad k=0,\ldots n.
\end{align*}
\end{lemma}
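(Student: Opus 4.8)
The plan is to compute the eigenvalues of $\Delta(C)$ on each finite-dimensional eigenspace $U_n$ of $\Delta(A_0)$ by reducing $\Delta(C)$ to an explicit $(n+1)\times(n+1)$ matrix (or tridiagonal operator) in the direct-product basis, and then to identify its spectrum. Since $\Delta(C)$ stabilizes $U_n$ and $U_n$ is $(n+1)$-dimensional, there are exactly $n+1$ eigenvalues, matching the stated index range $k=0,\ldots,n$, which is reassuring.

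First I would write down $\Delta(C)$ explicitly using \eqref{Casimir} and the coproduct \eqref{CoProd}. Since $\Delta(A_{\pm})=A_{\pm}\otimes P+1\otimes A_{\pm}$, $\Delta(A_0)=A_0\otimes 1+1\otimes A_0$ and $\Delta(P)=P\otimes P$, one gets
\begin{align*}
\Delta(C)=(A_{+}\otimes P+1\otimes A_{+})(A_{-}\otimes P+1\otimes A_{-})(P\otimes P)-(A_0\otimes 1+1\otimes A_0)(P\otimes P)+(P\otimes P)/2.
\end{align*}
Next I would let this act on the direct-product basis vectors $e^{(\epsilon_i,\nu_i)}_{p}\otimes e^{(\epsilon_j,\nu_j)}_{q}$ with $p+q=n$ spanning $U_n$, using the representation formulas \eqref{Representations}. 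The $P\otimes P$ factor contributes $\epsilon_i\epsilon_j(-1)^{p+q}=\epsilon_i\epsilon_j(-1)^{n}$ uniformly on $U_n$, which factors out cleanly. The diagonal piece $-(A_0\otimes1+1\otimes A_0)$ gives the scalar $-(n+\nu_i+\nu_j+1)$ on $U_n$, again uniform. The cross-terms from the product $\Delta(A_+)\Delta(A_-)$ will produce the off-diagonal hopping in $p$ and a diagonal contribution, and here the factors $[p]_{\nu}$, $[q]_\nu$ and the sign-dependent reflection factors will appear.

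The core of the argument is then to diagonalize the resulting tridiagonal matrix. The structure of the answer, with alternating sign $(-1)^{k+1}$ and linear dependence $k+\nu_i+\nu_j+1/2$, strongly suggests that $\Delta(C)$ restricted to $U_n$ is conjugate (or directly equal up to the scalar factors already extracted) to a shifted, signed version of the diagonal operator $A_0$ on a rank-$(n+1)$ module; indeed one expects $U_n$ to decompose into the irreducible $sl_{-1}(2)$-module in which $\Delta(C)$ acts as $-\epsilon\nu$ per \eqref{Ultra}. So the cleanest route is to show that $U_n$ carries an $sl_{-1}(2)$-action (via $\Delta$) under which it is \emph{not} irreducible but decomposes as a sum of one-dimensional Casimir-eigenspaces, with each value $\lambda_I(k)$ arising from the finite truncation; matching the Casimir eigenvalue formula $-\epsilon\nu$ to $\lambda_I(k)=(-1)^{k+1}\epsilon_i\epsilon_j(k+\nu_i+\nu_j+1/2)$ then pins down the effective $\epsilon$ and $\nu$ for each summand.

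The main obstacle will be the sign bookkeeping introduced by the reflection operators $P$ inside $\Delta(C)$: the factor $(-1)^{p}$ and $(-1)^{q}$ appearing through the $P\otimes P$ and through $[p]_\nu=p+\nu(1-(-1)^p)$ couples the even/odd parity of $p$ to the matrix entries, so the tridiagonal matrix is not a textbook Jacobi matrix but a parity-graded one. I expect to handle this by splitting according to the parity of $k$ (mirroring the even/odd case distinction seen in the Bannai--Ito recurrence coefficients \eqref{Recu-Coeff}) and verifying the eigenvalue formula on each parity class, either by an explicit induction on $n$ or by recognizing the truncated $q\to-1$ Clebsch--Gordan/Racah structure already worked out in the cited references \cite{Genest-2012,Genest-2013-02,Tsujimoto-2011-10}.
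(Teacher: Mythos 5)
Your setup is correct as far as it goes: the expansion of $\Delta(C)$ via the coproduct is right, $\Delta(P)=P\otimes P$ does act as the scalar $\epsilon_i\epsilon_j(-1)^{n}$ on all of $U_n$, $\Delta(A_0)$ as the scalar $n+\nu_i+\nu_j+1$, and the remaining piece $\Delta(A_+)\Delta(A_-)$ restricted to $U_n$ is indeed a parity-graded tridiagonal matrix in the direct-product basis. But the proposal stops exactly where the work begins: you never diagonalize that matrix, and none of the three exits you offer closes the argument. ``Recognizing the truncated $q\to-1$ Clebsch--Gordan/Racah structure already worked out in the cited references'' is citing the statement, not proving it. Using the decomposition of $V^{(\epsilon_i,\nu_i)}\otimes V^{(\epsilon_j,\nu_j)}$ into irreducibles is circular in this paper's logic, since that decomposition \eqref{Sub-Decomposition} is deduced as a corollary of this very lemma; note also that your phrasing is off here, as $U_n$ is \emph{not} stable under $\Delta(A_{\pm})$ and so carries no $sl_{-1}(2)$-action --- the one-dimensional Casimir eigenspaces are intersections of $U_n$ with the irreducible summands of the full tensor product, not submodules of $U_n$. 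Finally, an ``explicit induction on $n$'' at the level of the tridiagonal matrices is not straightforward: the matrices on $U_{n-1}$ and $U_n$ have different sizes and you have exhibited no recursion between their characteristic polynomials.

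The missing idea --- which is the paper's entire proof --- is to run the induction with the ladder operator instead of with matrices. Since $\Delta$ is an algebra morphism, $\Delta(C)$ commutes with $\Delta(A_+)$, and $\Delta(A_+)$ maps $U_{n-1}$ into $U_n$ injectively (e.g.\ because $\Delta(A_-)\Delta(A_+)=\Delta(A_0)-\Delta(C)\Delta(P)+1/2$ is positive definite on $U_{n-1}$ under the induction hypothesis, as $\nu_i,\nu_j>-1/2$). Hence the $n$ eigenvalues $\lambda_I(0),\ldots,\lambda_I(n-1)$ on $U_{n-1}$ transport to $U_n$ with independent eigenvectors, and a dimension count leaves exactly one new eigenvalue. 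It is computed on a lowest-weight vector $v\in U_n$ with $\Delta(A_-)v=0$, which exists since $\Delta(A_-):U_n\to U_{n-1}$ must have nontrivial kernel (or can be built by solving a two-term recurrence in the product basis). On such a $v$ the anticommutation $\{A_-,P\}=0$ kills the first term of the Casimir: $\Delta(A_+)\Delta(A_-)\Delta(P)v=-\Delta(A_+)\Delta(P)\Delta(A_-)v=0$, so $\Delta(C)v=\bigl(-\Delta(A_0)+1/2\bigr)\Delta(P)v=(-1)^{n+1}\epsilon_i\epsilon_j(n+\nu_i+\nu_j+1/2)\,v=\lambda_I(n)v$, using the two scalar actions you already identified. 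This two-line computation replaces the diagonalization entirely; your tridiagonal route could in principle be completed instead, but then you must actually produce the matrix entries and exhibit the eigenvectors or the characteristic polynomial, none of which the proposal does.
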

\begin{proof}
By induction on $n$. The $n=0$ case is verified by acting with $\Delta(C)$ on the single basis vector $e_{0}^{(\epsilon_i,\nu_i)}\otimes e_0^{(\epsilon_j,\nu_j)}$ of $U_0$. Suppose that the result holds at level $n-1$. Using the fact that $\Delta(C)$ and $\Delta(A_{+})$ commute and the induction hypothesis, one obtains from the action of $\Delta(A_{+})$ on $U_{n-1}$ eigenvectors of $\Delta(C)$ in $U_{n}$ with eigenvalues $\lambda_I(k)$ for $k=0,\ldots,n-1$. Let $v\in U_{n}$ be such that $\Delta(A_-)v=0$. Such a vector can explicitly be constructed in the direct product basis by solving the corresponding two-term recurrence relation. It is verified that $v$ is an eigenvector of $\Delta(P)$ with eigenvalue $(-1)^{n}\epsilon_i\epsilon_j$ and of $\Delta(C)$ with eigenvalue $\lambda_I(n)$.
\end{proof}
As a direct corollary one has the following decomposition of the tensor product module in irreducible components:
\begin{align}
\label{Sub-Decomposition}
V^{(\epsilon_i,\nu_i)}\otimes V^{(\epsilon_j,\nu_j)}=\bigoplus_{k}V^{(\epsilon_{ij}(k),\nu_{ij}(k))},
\end{align}
with 
\begin{align}
\label{Decompo}
\epsilon_{ij}(k)=(-1)^{k}\epsilon_i\epsilon_j,\quad \nu_{ij}(k)=k+\nu_i+\nu_j+1/2,\qquad k\in \mathbb{N}.
\end{align}
The eigenvalues of the total Casimir operator $(1\otimes \Delta)\Delta(C)$ on $V$ are obtained by using twice the decomposition \eqref{Sub-Decomposition} and Lemma 1 on \eqref{Decomposition}. It is readily seen performing these decompositions on the LHS of \eqref{Decomposition} that the eigenvalues $\lambda_{T}$ of  the total Casimir operator are given by
\begin{align}
\label{Total-Eigen-1}
\lambda_{T}=(-1)^{k+1}\epsilon_{12}(\ell)\epsilon_{3}(k+\nu_{12}(\ell)+\nu_3+1/2),\qquad k,\ell \in \mathbb{N}.
\end{align}
A similar formula involving $\epsilon_{23}$ and $\nu_{23}$ is obtained by considering instead the RHS of \eqref{Decomposition}. Upon using \eqref{Decompo}, the eigenvalues $\lambda_{T}$ can be cast in the form
\begin{align}
\label{Total-Eigen-2}
\lambda_{T}(N)=-\epsilon(N)\nu(N),
\end{align}
with $N$ a non-negative integer and
\begin{align}
\label{Total-Eigen-3}
\epsilon(N)=(-1)^{N}\epsilon_1\epsilon_2\epsilon_3,\qquad \nu(N)=(N+\nu_1+\nu_2+\nu_3+1).
\end{align}
The formula \eqref{Total-Eigen-2} and \eqref{Total-Eigen-3} indicate which irreducible modules appear in the decomposition of $V$. The multiplicity of $V^{(\epsilon(N),\nu(N))}$ in this decomposition is $N+1$ since for a given value of $N$ there are $N+1$ possible eigenvalues of the intermediate Casimir operators; the decomposition formula for $V$ is thus
\begin{align}
\label{Full-Decompo}
V=\bigoplus_{N=0}^{\infty}m_{N}V^{(\epsilon(N),\nu(N))},
\end{align}
where $m_{N}=N+1$ and where $\epsilon(N)$, $\nu(N)$ are given by \eqref{Total-Eigen-3}.

Returning to the realization \eqref{Full-Algebra} of the module $V$ with $\epsilon_i=1$ and $\nu_i=\mu_i$, the eigenvalues of $\Omega=\widetilde{C}\widetilde{P}$ are readily obtained. Recalling \eqref{Ultra}, it follows from \eqref{Total-Eigen-2} and \eqref{Total-Eigen-3} that the eigenvalues $\omega_{N}$ of $\Omega$ are
\begin{align}
\label{Total-Eigen-4}
\omega_{N}=-(N+\mu_1+\mu_2+\mu_3+1),
\end{align}
where $N$ is a non-negative integer. The relation \eqref{Main-Result} then leads to the following.
\begin{proposition}
The eigenvalues $\delta$ of the Dunkl Laplacian $\Delta_{S^2}$ on the 2-sphere are indexed by the non-negative integer $N$ and have the expression
\begin{align}
\label{Eigen-Dunkl}
\delta_{N}=-N(N+2\mu_1+2\mu_2+2\mu_3+1).
\end{align}
\end{proposition}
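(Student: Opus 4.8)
The plan is to read the spectrum of $\Delta_{S^2}$ off directly from the operator identity \eqref{Main-Result} together with the already-determined spectrum of $\Omega$ in \eqref{Total-Eigen-4}, without ever touching the differential operator again. The crucial observation is that \eqref{Main-Result} writes $-\Delta_{S^2}$ as a \emph{fixed quadratic polynomial in the single operator} $\Omega$. Hence $\Delta_{S^2}$ and $\Omega$ are simultaneously diagonalizable, every eigenvector of $\Omega$ is automatically an eigenvector of $\Delta_{S^2}$, and the corresponding eigenvalue is obtained by evaluating that polynomial at the relevant eigenvalue of $\Omega$. In this way the entire analytic content has been pushed upstream into Proposition 1 and into the representation-theoretic computation leading to \eqref{Total-Eigen-4}, and what remains for Proposition 2 is a substitution.

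Concretely, I would proceed in two steps. First, invoke \eqref{Total-Eigen-4}: the eigenvalues of $\Omega$ are $\omega_N=-(N+\mu_1+\mu_2+\mu_3+1)$ for $N$ a non-negative integer, these arising from the decomposition \eqref{Full-Decompo} of $V$ into irreducible $sl_{-1}(2)$-modules and the Casimir action \eqref{Ultra}. Second, substitute $\omega_N$ into \eqref{Main-Result} to get $-\delta_N=\omega_N^2+\omega_N-(\mu_1+\mu_2+\mu_3)(\mu_1+\mu_2+\mu_3+1)$. Writing $\mu=\mu_1+\mu_2+\mu_3$ for brevity, the quadratic part factors cleanly as $\omega_N^2+\omega_N=\omega_N(\omega_N+1)=(N+\mu+1)(N+\mu)$, since $\omega_N+1=-(N+\mu)$. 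Subtracting $\mu(\mu+1)$ cancels the purely $\mu$-dependent terms and leaves $-\delta_N=N(N+2\mu+1)$, which is exactly \eqref{Eigen-Dunkl}.

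Since the computation is routine, the real work is already finished and I expect no genuine obstacle, only one point of bookkeeping that I would address explicitly: completeness, i.e. that the $\delta_N$ exhaust the spectrum and that no spurious eigenvalues are introduced. This follows because $\Delta_{S^2}$ is a polynomial in $\Omega$, so it admits no eigenvalue outside the image of the $\omega_N$ under that polynomial, while \eqref{Full-Decompo} accounts for every irreducible component of $V$ and hence every eigenvalue of $\Omega$. I would also note that the passage to the sphere is legitimate precisely because $\Omega$ is purely angular, as established earlier through $\{\Omega,\widetilde{X}\}=0$ together with $\widetilde{X}^2=x_1^2+x_2^2+x_3^2$: thus $\Omega$ descends to a well-defined operator on functions on $S^2$ carrying the eigenvalues $\omega_N$, and the identity \eqref{Main-Result} transfers this spectrum to $\Delta_{S^2}$ verbatim. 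The only mild subtlety is therefore keeping track of which eigenvalues of the ambient operator genuinely occur on $S^2$, rather than any analytic difficulty.
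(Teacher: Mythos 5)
Your proposal is correct and follows exactly the paper's route: the paper's proof of Proposition 2 is precisely ``by Proposition 1 and the above considerations,'' i.e.\ substituting the eigenvalues $\omega_N$ of $\Omega$ from \eqref{Total-Eigen-4} into the quadratic identity \eqref{Main-Result}, which is your computation verbatim. Your added remarks on completeness and on $\Omega$ being purely angular are consistent with the paper's surrounding discussion and do not constitute a different argument.
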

\begin{proof}
By proposition 1 and the above considerations.
\end{proof}
The eigenvalues of proposition $2$ are in accordance with those obtained in \cite{Dunkl-2001}. It is seen that upon specializing \eqref{Eigen-Dunkl} to $\mu_1=\mu_2=\mu_3=0$, one recovers the spectrum of the standard Laplacian on the 2-sphere. It is worth mentioning that the formula \eqref{Eigen-Dunkl} does not provide information on the degeneracy of the eigenvalues. This question will be discussed in the following.
\section{Commutant of $\Delta_{S^2}$ and the Bannai--Ito algebra}
In this section, the operators commuting with the Dunkl Laplacian on the 2-sphere are exhibited and are shown to generate a central extension of the Bannai--Ito algebra. The eigensubspaces corresponding to the simultaneous diagonalization of $\Delta_{S^2}$ and $\Omega$ are seen to support finite-dimensional irreducible representations of the BI algebra and the matrix elements of these representations are constructed.
\subsection{Commutant of $\Delta_{S^2}$ and symmetry algebra}
The operators that commute with the Dunkl Laplacian $\Delta_{S^2}$ on the 2-sphere, referred to as the \emph{symmetries} of $\Delta_{S^2}$, can be obtained from the relation \eqref{Main-Result} and the framework provided by the Racah problem of $sl_{-1}(2)$. By construction, the intermediate Casimir operators \eqref{Intermediate} commute with the total Casimir \eqref{Total} and with the involution $\widetilde{P}$. As a consequence of \eqref{Main-Result}, one thus has
\begin{align*}
[\Delta_{S^2}, C^{(12)}]=[\Delta_{S^2},C^{(23)}]=0.
\end{align*}
Let $K_1$, $K_3$ be the following operators:
\begin{align}
\label{K-Def}
K_1=-C^{(23)},\qquad K_3=-C^{(12)},
\end{align}
which obviously commute with the Dunkl Laplacian on $S^2$. Upon using \eqref{Dunkl-Realization} and \eqref{Intermediate}, the symmetries $K_1$, $K_3$ are seen to have the expressions
\begin{subequations}
\label{BI-Generators}
\begin{align}
K_1&=(x_2\mathcal{D}_3-x_3 \mathcal{D}_2)R_2+\mu_2 R_3+\mu_3 R_2+(1/2) R_2R_3,
\\
K_3&=(x_1\mathcal{D}_2-x_2 \mathcal{D}_1)R_{1}+\mu_1 R_2+\mu_2 R_1+(1/2) R_1R_2,
\end{align}
where $\mathcal{D}_i$ and $R_i$ are given by \eqref{Dunkl-D}. Consider the operator $K_2$ defined by
\begin{align}
\label{C}
K_2&=(x_1\mathcal{D}_3-x_3 \mathcal{D}_1)R_{1}R_{2}+\mu_1 R_3+\mu_3 R_1+(1/2)R_1R_3.
\end{align}
\end{subequations}
It is verified by an explicit calculation that $K_2$ is also a symmetry of the Dunkl-Laplacian $\Delta_{S^2}$, i.e. $[\Delta_{S^2},K_2]=0$. 
\begin{remark}
Note that $K_2$ does not correspond to an intermediate Casimir operator since it has a non-trivial action on all three variables $x_1,x_2,x_3$.
\end{remark}
The three operators $K_i$, $i=1,2,3$, and the operator $\Omega$ given by \eqref{Def-Omega} are not independent from one another. As a matter of fact, one has
\begin{align*}
\Omega=-K_1 R_2R_3-K_2R_1 R_3-K_3 R_1R_2+\mu_1 R_1+\mu_2R_2+\mu_3 R_3+1/2.
\end{align*}
We now give the \emph{symmetry algebra} generated by the operators commuting with the Dunkl-Laplace operator $\Delta_{S^2}$ on the 2-sphere.
\begin{proposition}
Let $\Delta_{S^2}$ be the Dunkl Laplacian \eqref{Dunkl-Laplacian-S2} on the 2-sphere  and let $\widetilde{C}$ and $K_i$, $i=1,2,3$  be given by \eqref{Total} and \eqref{BI-Generators}, respectively. One has
\begin{align*}
[\Delta_{S^2}, K_i]=[\Delta_{S^2}, \widetilde{C}]=0.
\end{align*}
and the symmetry algebra of $\Delta_{S^2}$ is
\begin{subequations}
\label{Sym-Algebra}
\begin{align}
\{K_1,K_2\}&=K_3-2\mu_3\widetilde{C}+2\mu_1\mu_2,
\\
\{K_2,K_3\}&=K_1-2\mu_1\widetilde{C}+2\mu_2\mu_3,
\\
\{K_3,K_1\}&=K_2-2\mu_2 \widetilde{C}+2\mu_1\mu_3.
\end{align}
\end{subequations}
\end{proposition}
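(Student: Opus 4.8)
The statement has two parts. The commutation relations follow almost immediately from what has been assembled. Since $K_1=-C^{(23)}$ and $K_3=-C^{(12)}$ are intermediate Casimir operators, the Remark following \eqref{Total} guarantees that they commute with both $\widetilde{C}$ and $\widetilde{P}=P^{(1)}P^{(2)}P^{(3)}$; because Proposition 1 writes $-\Delta_{S^2}$ as a polynomial in $\Omega=\widetilde{C}\widetilde{P}$, it follows that $[\Delta_{S^2},K_1]=[\Delta_{S^2},K_3]=0$. The total Casimir commutes with $\widetilde{P}$ and hence with $\Omega$, giving $[\Delta_{S^2},\widetilde{C}]=0$, while $[\Delta_{S^2},K_2]=0$ is the explicit verification already recorded just before the statement. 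So only the three anticommutation relations require real work.

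For these I would compute directly in the Dunkl algebra. Introducing the Dunkl angular momenta $L_{ij}=x_i\mathcal{D}_j-x_j\mathcal{D}_i$, the generators read $K_3=L_{12}R_1+\mu_1 R_2+\mu_2 R_1+(1/2)R_1R_2$, $K_1=L_{23}R_2+\mu_2 R_3+\mu_3 R_2+(1/2)R_2R_3$ and $K_2=L_{13}R_1R_2+\mu_1 R_3+\mu_3 R_1+(1/2)R_1R_3$. In forming $\{K_1,K_2\}$ one pushes the reflections to the right using $R_i x_i=-x_i R_i$, $R_i\mathcal{D}_i=-\mathcal{D}_i R_i$ and $R_i x_j=x_j R_i$, $R_i\mathcal{D}_j=\mathcal{D}_j R_i$ for $i\neq j$; the quadratic part then collapses to the single commutator $[L_{23},L_{13}]R_1$. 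The computational heart is the Dunkl angular-momentum identity $[L_{23},L_{13}]=L_{12}(1+2\mu_3 R_3)$, which is extracted from the basic Dunkl relation $[\mathcal{D}_i,x_j]=\delta_{ij}(1+2\mu_i R_i)$. Its first piece $L_{12}R_1$ supplies the generator $K_3$ on the right-hand side, and its second piece $2\mu_3 L_{12}R_1R_3$ supplies a large part of the $\widetilde{C}$ term.

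To recognize the remaining terms I would use the explicit form of the total Casimir. From the identity for $\Omega$ in terms of the $K_i$ displayed just before the statement, together with $\widetilde{C}=\Omega\widetilde{P}$ (valid since $\widetilde{P}^2=1$), one obtains $\widetilde{C}=-K_1R_1-K_2R_2-K_3R_3+\mu_1R_2R_3+\mu_2R_1R_3+\mu_3R_1R_2+(1/2)\widetilde{P}$. Expanding $L_{12}R_1R_3$ through the definition of $K_3$ already reproduces the $K_3R_3$, $R_2R_3$, $R_1R_3$ and $\widetilde{P}$ contributions of $-2\mu_3\widetilde{C}$; the cross terms between the $L_{ij}$-parts and the reflection-parts of $K_1$ and $K_2$, together with the products of the reflection-parts, must then combine to produce the surviving pieces $2\mu_3 K_1R_1+2\mu_3 K_2R_2$ of $-2\mu_3\widetilde{C}$ and the scalar $2\mu_1\mu_2$. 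The other two relations are assembled in exactly the same fashion.

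The main obstacle is this last assembly. After the clean quadratic reduction one is left with many first-order and reflection-dressed terms whose cancellations into the combination defining $\widetilde{C}$ are delicate, and---as the Remark stresses, $K_2$ being distinguished---the three $K_i$ are not permuted by any naive symmetry of the variables, so the three relations genuinely have to be checked one by one rather than deduced from one another by a cyclic substitution. A more structural alternative would bypass coordinates entirely: working in $sl_{-1}(2)^{\otimes 3}$, the anticommutator $\{C^{(12)},C^{(23)}\}$ of the two intermediate Casimirs closes, via the coproduct \eqref{CoProd}, onto a third Casimir-type element plus central terms built from the initial Casimirs $C^{(i)}=-\mu_i$ and the total Casimir; this is the representation-independent origin of the Bannai--Ito relations, but it requires first matching $K_2$ with the appropriate element of the tensor algebra.
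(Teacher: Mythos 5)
Your proposal is correct and follows essentially the same route as the paper: the commutation relations $[\Delta_{S^2},K_1]=[\Delta_{S^2},K_3]=[\Delta_{S^2},\widetilde{C}]=0$ are obtained exactly as in the paper's discussion preceding the proposition (intermediate Casimirs commute with $\widetilde{C}$ and $\widetilde{P}$, hence with any polynomial in $\Omega$, and $[\Delta_{S^2},K_2]=0$ is checked directly), while the anticommutation relations \eqref{Sym-Algebra} are established by the same explicit calculation with the Cartesian expressions \eqref{BI-Generators} that the paper's one-line proof invokes, which you merely organize through the correct identities $[L_{23},L_{13}]=L_{12}(1+2\mu_3R_3)$ and $\widetilde{C}=-K_1R_1-K_2R_2-K_3R_3+\mu_1R_2R_3+\mu_2R_1R_3+\mu_3R_1R_2+\tfrac{1}{2}\widetilde{P}$. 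I verified that your reduction $\{L_{23}R_2,\,L_{13}R_1R_2\}=[L_{23},L_{13}]R_1$ is right and that the remaining cross terms and products of reflection parts do close onto $K_3-2\mu_3\widetilde{C}+2\mu_1\mu_2$, so your outline is sound and, if anything, more detailed than the paper's own proof.
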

\begin{proof}
By an explicit calculation using \eqref{Dunkl-Laplacian-S2} and \eqref{BI-Generators}.
\end{proof}
The algebra \eqref{Sym-Algebra} corresponds to a central extension of the Bannai--Ito algebra \eqref{BI-Algebra} by the total Casimir operator $\widetilde{C}$. Since $\widetilde{C}$ (and $\Omega$) commutes with $\Delta_{S^2}$, there is a basis in which they are both diagonal. From \eqref{Total-Eigen-3} and \eqref{Total-Eigen-4},  it follows that the eigenvalues of $\widetilde{C}$ are of the form $-\epsilon\, \mu$ with
\begin{align}
\label{Choupette}
\epsilon=(-1)^{N},\qquad \mu=(N+\mu_1+\mu_2+\mu_3+1).
\end{align}
For a given $N$, the $\Delta_{S^2}$-eigenspaces arising under the joint diagonalization of $\Delta_{S^2}$ and $\widetilde{C}$ (or $\Omega$) are $(N+1)$-dimensional as per the decomposition \eqref{Full-Decompo} of the tensor product module $V$ in irreducible components. Hence the eigenvalues $\delta_{N}$ of $\Delta_{S^2}$ given by \eqref{Eigen-Dunkl} are at least $(N+1)$-fold degenerate. It can be seen that this degeneracy is in fact higher. Indeed, $\Delta_{S^2}$ commutes with every reflection operator $R_i$, but $\widetilde{C}$ (and $\Omega$) only commute with their product $R_1R_2R_3$. Consequently one can obtain eigenfunctions of $\Delta_{S^2}$ with eigenvalue $\delta_{N}$ that are not eigenfunctions of $\widetilde{C}$ by applying any reflection $R_i$ on a given eigenfunction of $\widetilde{C}$. It is known \cite{Dunkl-2001} that the eigenspaces corresponding to the eigenvalue $\delta_{N}$ are in fact $(2N+1)$-fold degenerate, as shall be seen in Section 4.

Notwithstanding the degeneracy question, it follows from Proposition 3 and \eqref{Choupette} that the eigensubspaces of the Laplace-Dunkl operator corresponding to the simultaneous diagonalization of $\Delta_{S^2}$ and $\widetilde{C}$ support an $(N+1)$-dimensional module of the Bannai--Ito algebra \eqref{BI-Algebra} with structure constants taking the values
\begin{align}
\label{rea}
\alpha_1=2(\mu_1\mu+\mu_2\mu_3),\quad \alpha_2=2(\mu_1\mu_3+\mu_2\mu),\quad \alpha_3=2(\mu_1\mu_2+\mu_3\mu),
\end{align}
where $\mu=(-1)^{N}(N+\mu_1+\mu_2+\mu_3+1)$. The Casimir operator $\mathbf{K}^2=K_1^2+K_2^2+K_3^2$ of the Bannai--Ito algebra can be expressed in terms of $\widetilde{C}$ as follows:
\begin{align*}
\mathbf{K}^2=\widetilde{C}^2+\mu_1^2+\mu_2^2+\mu_3^2-1/4,
\end{align*}
and hence using \eqref{Choupette} one has
\begin{align}
\label{rea-2}
\mathbf{K}^2=\mu_1^2+\mu_2^2+\mu_3^2+\mu^2-1/4.
\end{align}
The realization \eqref{rea}, \eqref{rea-2} of the Bannai--Ito algebra corresponds to the one arising in the Racah problem for $sl_{-1}(2)$ studied in \cite{Genest-2012}. We shall now obtain the matrix elements of the generators in this realization.
\subsection{Irreducible modules of the Bannai--Ito algebra}
We begin by examining the representations of \eqref{BI-Algebra} with structure constants \eqref{rea} in the eigenbasis $\{\psi_{k}\}_{k=0}^{N}$ of $K_3$. Using the result of Lemma 1 and \eqref{K-Def}, it follows that
\begin{align}
\label{Dompe}
K_3\psi_{k}=\omega_{k}\psi_{k},\qquad \omega_{k}=(-1)^{k}(k+\mu_1+\mu_2+1/2),
\end{align}
We define the action of $K_1$ by
\begin{equation}
K_1\psi_k=\sum_{s}Z_{s,k}\psi_{s}.
\end{equation}
From the second relation of \eqref{BI-Algebra} one finds
\begin{align*}
\sum_{s}Z_{s,k}\left[(\omega_{k}+\omega_{s})^2-1\right]\psi_{s}=\left[\alpha_1+2\omega_{k}\alpha_2\right]\psi_{k}.
\end{align*}
When $s=k$, one immediately obtains
\begin{align}
\label{Dompe-2}
Z_{k,k}\equiv V_{k}=\frac{\alpha_1+2\omega_{k}\alpha_2}{4\omega_k^2-1}.
\end{align}
When $s\neq k$, one of the following conditions must hold
\begin{align*}
(\omega_{k}+\omega_{s})^2-1=0,\quad \text{or}\quad Z_{s,k}=0.
\end{align*}
In view of the formula \eqref{Dompe} for the eigenvalues $\omega_{k}$, it is directly seen that only $Z_{k+1,k}$, $Z_{k,k}$ and $Z_{k-1,k}$ can be non-vanishing. Thus one can take
\begin{align}
\label{Action-1}
K_1\psi_{k}=U_{k+1}\psi_{k+1}+V_{k}\psi_{k}+U_{k}\psi_{k-1},
\end{align}
where $V_k$ is given by \eqref{Dompe-2} and where $U_{k}$ remains to be determined. It follows from \eqref{BI-Algebra} and \eqref{Action-1} that $K_2$ has the action
\begin{align}
\label{Action-2}
K_2\psi_{k}=(-1)^{k+1}U_{k+1}+W_{k}\psi_{k}+(-1)^{k}U_{k}\psi_{k-1},
\end{align}
where $W_{k}=2\omega_k V_k-\alpha_2$. Upon using the actions \eqref{Action-1}, \eqref{Action-2} in the first relation of \eqref{BI-Algebra} and comparing the terms in $\psi_{k}$, one obtains the recurrence relation for $U_{k}^2$
\begin{align}
\label{A}
2\left\{(-1)^{k+1}U_{k+1}^2+W_{k}V_{k}+(-1)^{k}U_{k}^2\right\}=\omega_{k}+\alpha_3.
\end{align}
Acting on $\psi_{k}$ with \eqref{rea-2} and using the actions \eqref{Action-1}, \eqref{Action-2}, one finds
\begin{align}
\label{B}
\left\{\omega_{k}^2+W_{k}^2+V_{k}^2+2U_{k}^2+2 U_{k+1}^2\right\}=\mu_1^2+\mu_2^2+\mu_3^2+\mu^2-1/4.
\end{align}
The equations \eqref{A}, \eqref{B} can be used to solve for $U_{k}^2$ by eliminating $U_{k+1}^2$. Straightforward calculations then lead to the following result.
\begin{proposition}
Let $\mathcal{W}$ be the $(N+1)$-dimensional vector space spanned by the basis vectors $\psi_{k}$, $k=0,\ldots,N$, and let 
\begin{align}
\label{Mu}
\mu=(-1)^{N}(N+1+\mu_1+\mu_2+\mu_3).
\end{align}
An irreducible module for the Bannai--Ito algebra \eqref{BI-Algebra} with structure constants \eqref{rea} is obtained by endowing $\mathcal{W}$ with the actions
\begin{subequations}
\label{Actions-4}
\begin{align}
K_3\psi_{k}&=\omega_k \psi_k,
\\
K_2\psi_{k}&=(-1)^{k+1}U_{k+1}\psi_{k+1}+(2\omega_kV_k-\alpha_2)\psi_{k}+(-1)^{k}U_{k}\psi_{k-1},
\\
K_1\psi_{k}&=U_{k+1}\psi_{k+1}+V_{k}\psi_{k}+U_{k}\psi_{k-1},
\end{align}
\end{subequations}
where $\omega_{k}=(-1)^{k}(k+\mu_1+\mu_2+1/2)$, $V_{k}=\mu_2+\mu_3+1/2-B_{k}-D_{k}$ and  where $U_{k}=\sqrt{B_{k-1}D_{k}}$ with
\begin{align*}
B_{k}&=
\begin{cases}
\frac{(k+2\mu_2+1)(k+\mu_1+\mu_2+\mu_3-\mu+1)}{2(k+\mu_1+\mu_2+1)}, & \text{$k$ is even},
\\
\frac{(k+2\mu_1+2\mu_2+1)(k+\mu_1+\mu_2+\mu_3+\mu+1)}{2(k+\mu_1+\mu_2+1)}, & \text{$k$ is odd},
\end{cases}
\\
D_{k}&=
\begin{cases}
\frac{-k(k+\mu_1+\mu_2-\mu_3-\mu)}{2(k+\mu_1+\mu_2)}, & \text{$k$ is even},
\\
\frac{-(k+2\mu_1)(k+\mu_1+\mu_2-\mu_3+\mu)}{2(k+\mu_1+\mu_2)}, & \text{$k$ is odd}.
\end{cases}
\end{align*}
\end{proposition}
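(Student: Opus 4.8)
The plan is to realise the module in the eigenbasis $\{\psi_k\}_{k=0}^{N}$ of $K_3$, where nearly all of the structural work is already in place: the spectrum $\omega_k=(-1)^{k}(k+\mu_1+\mu_2+1/2)$ is fixed by Lemma 1 together with \eqref{K-Def} and \eqref{Dompe}, the tridiagonal shape \eqref{Action-1} of $K_1$ and its diagonal entries $V_k$ in \eqref{Dompe-2} follow from the second relation of \eqref{BI-Algebra}, the action \eqref{Action-2} of $K_2$ is dictated by $\{K_3,K_1\}=K_2+\alpha_2$ with $W_k=2\omega_kV_k-\alpha_2$, and the two scalar constraints \eqref{A} and \eqref{B} come from the first relation of \eqref{BI-Algebra} and from the Casimir \eqref{rea-2}. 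What remains is computational: solve \eqref{A}--\eqref{B} for the off-diagonal data $U_k$, cast $V_k$ and $U_k^2$ in the advertised factored form, and check that the matrices truncate to an $(N+1)$-dimensional irreducible module.

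First I would extract $U_k^2$ algebraically. Regarding $k$ as fixed, \eqref{A} and \eqref{B} form a linear system in the two unknowns $U_k^2$ and $U_{k+1}^2$; multiplying \eqref{A} by $(-1)^{k}$ and adding it to \eqref{B} eliminates $U_{k+1}^2$ and yields the closed expression
\[
4U_k^2=\big(\mu_1^2+\mu_2^2+\mu_3^2+\mu^2-\tfrac14-\omega_k^2-W_k^2-V_k^2\big)+(-1)^{k}\big(\omega_k+\alpha_3-2W_kV_k\big),
\]
so that no genuine recursion has to be solved. Into this one substitutes $V_k$ from \eqref{Dompe-2}, $W_k=2\omega_kV_k-\alpha_2$, the explicit $\omega_k$, and the structure constants \eqref{rea}. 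In parallel I would simplify \eqref{Dompe-2} directly to confirm $V_k=\mu_2+\mu_3+1/2-B_k-D_k$. Because $\omega_k$ and $W_k$ carry the sign $(-1)^{k}$, both simplifications split into the cases $k$ even and $k$ odd, which is precisely the parity structure displayed in the definitions of $B_k$ and $D_k$.

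The main obstacle is the factorisation $U_k^2=B_{k-1}D_k$: after clearing the denominator $4\omega_k^2-1=(2\omega_k-1)(2\omega_k+1)$, the right-hand side above is a quartic in $k$ over a product of linear factors, and one must recognise it as the indicated product of two quadratics. I would organise this by parity --- for $k$ even, $B_{k-1}$ uses the odd-index formula while $D_k$ uses the even one (and conversely for $k$ odd) --- so that in each case the target is a single rational function whose numerator factors over the appropriate linear denominator; matching against the explicit $B_{k-1}D_k$ then pins down the factors. The overall sign of $U_k$ is immaterial, being a basis-rescaling freedom, so the positive square root may be taken.

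It remains to verify that $\mathcal{W}$ is a genuine $(N+1)$-dimensional irreducible module. The lower truncation is automatic: $D_0=0$, hence $U_0=0$ and $K_1\psi_0$ carries no $\psi_{-1}$ component. For the upper truncation one checks that the choice \eqref{Mu} of $\mu$ makes the factor $(k+\mu_1+\mu_2+\mu_3\mp\mu+1)$ in $B_N$ vanish at $k=N$, with the sign correlated to the parity of $N$; thus $B_N=0$, whence $U_{N+1}=0$ and $\mathcal{W}$ is invariant. Irreducibility then follows from a standard Jacobi-matrix argument: $K_3$ has simple spectrum, the $\omega_k$ being pairwise distinct for $\mu_1,\mu_2>-1/2$, so any submodule is spanned by a subset of the $\psi_k$; since $K_1$ is tridiagonal with $U_k\neq0$ for $1\le k\le N$, applying $K_1$ and projecting onto the $K_3$-eigenlines propagates any single $\psi_k$ to its neighbours, forcing the submodule to be all of $\mathcal{W}$.
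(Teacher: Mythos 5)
Your proposal is correct and follows essentially the same route as the paper: it reprises the derivation preceding the proposition (the $K_3$-eigenbasis data \eqref{Dompe}--\eqref{Dompe-2}, the actions \eqref{Action-1}--\eqref{Action-2}, and the elimination of $U_{k+1}^2$ between \eqref{A} and \eqref{B}), and your concluding checks match the paper's terse proof by direct verification with irreducibility from $U_k\neq 0$. The details you add --- the explicit eliminating combination, the truncation via $D_0=0$ and $B_N=0$ under the choice \eqref{Mu}, and the simple-spectrum tridiagonal argument for irreducibility --- are correct elaborations of what the paper leaves implicit.
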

\begin{proof}
One verifies directly that with \eqref{Actions-4} the defining relations \eqref{BI-Algebra}, \eqref{rea} are satisfied. The irreducibility follows from the fact that $U_{k}\neq 0$ for $\mu_i>-1/2$.
\end{proof}
In view of Proposition 4, it is natural to wonder what the representation matrix elements look like in other bases, say the eigenbases of either $K_1$ or $K_2$. These elements are easily obtained from the $\mathbb{Z}_3$ symmetry of the realization \eqref{rea}, \eqref{rea-2}. Indeed, it is verified that the algebra \eqref{BI-Algebra} with \eqref{rea}, \eqref{rea-2} is left invariant by any cyclic transformation of both $\{K_1,K_2,K_3\}$ and $\{\mu_1,\mu_2,\mu_3\}$. As a consequence, the representation matrix elements in the $K_1$ or $K_2$ eigenbasis can be obtained directly from Proposition 4 by applying the permutation $\pi=(123)$ or $\pi=(123)^2$ on the generators $K_i$ and the parameters $\mu_i$.
\section{$S^2$ basis functions for irreducible Bannai--Ito modules}
In this section, a family of orthonormal functions on $S^2$ that realize bases for the Bannai--Ito modules of Proposition 4 are constructed. It is shown that the Bannai--Ito polynomials arise as the overlap coefficients between two such bases separated in different spherical coordinates.
\subsection{Harmonics for $\Delta_{S^2}$}
It is useful to give here the Dunkl spherical harmonics $Y_{N}(\theta,\phi)$ which are the regular solutions to the eigenvalue equation
\begin{align}
\label{Above}
\Delta_{S^2}Y_{N}(\theta,\phi)=\delta_{N}Y_{N}(\theta,\phi),\qquad \delta_{N}=-N(N+2\mu_1+2\mu_2+2\mu_3+1),
\end{align}
where $\Delta_{S^2}$ is given by \eqref{Dunkl-Laplacian-S2}. The solutions to \eqref{Above} are well known and are given explicitly in \cite{Dunkl-2001} in terms of the generalized Gegenbauer polynomials. We give their expressions here in terms of Jacobi polynomials. In spherical coordinates \eqref{Spherical-Coordinates}, the solutions to \eqref{Above} read
\begin{multline}
\label{Dunkl-Harmonics}
Y_{n;N}^{(e_1,e_2,e_3)}(\theta,\phi)=\eta_{n;N}^{(e_1,e_2,e_3)}\,\cos^{e_3}\theta \sin^{n}\theta\,\cos^{e_1}\phi\sin^{e_2}\phi
\\
\times P_{(N-n-e_3)/2}^{(n+\mu_1+\mu_2,\mu_3+e_3-1/2)}(\cos 2\theta)\;P_{(n-e_1-e_2)/2}^{(\mu_2+e_2-1/2,\mu_1+e_1-1/2)}(\cos 2\phi),
\end{multline}
where $e_i\in\{0,1\}$, $n$ is a non-negative integer, $\eta_{N,n}^{(e_1,e_2,e_3)}$ is a normalization factor and $P_{n}^{(\alpha,\beta)}(x)$ are the standard Jacobi polynomials \cite{Koekoek-2010}. The harmonics \eqref{Dunkl-Harmonics} satisfy
\begin{align*}
R_i\,Y_{n;N}^{(e_1,e_2,e_3)}(\theta,\phi)=(1-2e_i)Y_{n;N}^{(e_1,e_2,e_3)}(\theta,\phi).
\end{align*}
In \eqref{Dunkl-Harmonics}, it is understood that half-integer (or negative) indices in $P_{n}^{(\alpha,\beta)}(x)$ do not provide admissible solutions. Recording the admissible values of $n$ and $e_i$ for a given $N$, one finds that there are $2N+1$ solutions and
\begin{align*}
R_1 R_2 R_3 Y_{n;N}^{(e_1,e_2,e_3)}(\theta,\phi)=(-1)^{N} Y_{n;N}^{(e_1,e_2,e_3)}(\theta,\phi).
\end{align*}
The normalization factor $\eta_{n;N}^{(e_1,e_2,e_3)}$ is given by
\begin{multline*}
\eta_{n;N}^{(e_1,e_2,e_3)}=\left[\frac{(\frac{n-e_1-e_1}{2})!(n+\mu_1+\mu_2)\Gamma(\frac{n+e_1+e_2}{2}+\mu_1+\mu_2)}{2\;\Gamma(\frac{n+e_1-e_2}{2}+\mu_1+1/2)\Gamma(\frac{n+e_2-e_1}{2}+\mu_2+1/2)}\right]^{1/2}
\\
\times
\left[\frac{(N+\mu_1+\mu_2+\mu_3+1/2)(\frac{N-n-e_3}{2})!\Gamma(\frac{N+n+e_3}{2}+\mu_1+\mu_2+\mu_3+1/2)}{\Gamma(\frac{N+n-e3}{2}+\mu_1+\mu_2+1)\Gamma(\frac{N-n+e_3}{2}+\mu_3+1/2)}\right]^{1/2},
\end{multline*}
where $\Gamma(x)$ stands for the Gamma function and ensures that
\begin{align*}
\int_{0}^{2\pi}\int_{0}^{\pi}Y_{n;N}^{(e_1,e_2,e_3)}Y_{n';N'}^{(e_1',e_2',e_3')}\;h(\theta,\phi)\;\sin\theta\;\mathrm{d}\theta\mathrm{d}\phi=\delta_{nn'}\delta_{NN'}\delta_{e_1e_1'}\delta_{e_2e_2'}\delta_{e_3e_3'},
\end{align*}
where the $\mathbb{Z}_2^{3}$-invariant weight function $h(\theta,\phi)$ is \cite{Dunkl-2001}
\begin{align}
\label{weight}
h(\theta,\phi)=|\cos\theta|^{2\mu_3}|\sin\theta|^{2\mu_1}|\sin\theta|^{2\mu_2}|\cos\phi|^{2\mu_1}|\sin \phi|^{2\mu_2}.
\end{align}
\subsection{$S^2$ basis functions for BI representations}
Let $\mathcal{Y}_{K}^{N}(\theta,\phi)$, $K=0,\ldots,N$ be the functions on $S^2$ satisfying
\begin{subequations}
\label{System}
\begin{align}
\label{A-1}
\Omega\,\mathcal{Y}_{K}^{N}(\theta,\phi)&=-(N+\mu_1+\mu_2+\mu_3+1)\,\mathcal{Y}_{K}^{N}(\theta,\phi),
\\
\label{A-2}
R_1R_2R_3\,\mathcal{Y}_{K}^{N}(\theta,\phi)&=(-1)^{N}\,\mathcal{Y}_{K}^{N}(\theta,\phi),
\\
\label{C-1}
K_3\,\mathcal{Y}_{K}^{N}(\theta,\phi)&=(-1)^{K}(K+\mu_1+\mu_2+1/2)\,\mathcal{Y}_{K}^{N}(\theta,\phi).
\end{align}
\end{subequations}
where $\Omega$ is given by \eqref{Def-Omega} and where $K_3$ is given by \eqref{K-Def}. In spherical coordinates \eqref{Spherical-Coordinates}, the operator $K_3$ has the expression
\begin{align*}
K_3=\partial_{\phi}R_1+\mu_1\tan\phi(1-R_1)+\frac{\mu_2}{\tan\phi}(R_1-R_1R_2)+\mu_1 R_2+\mu_2R_1+\frac{1}{2}R_1R_2.
\end{align*}
Since $K_3$ acts only on $\phi$, the functions $\mathcal{Y}_{K}^{N}(\theta,\phi)$ can be separated.

The solutions for the azimuthal part are readily obtained from \eqref{C-1} by considering separately the eigenvalue sectors of $R_1R_2$, which commutes with $K_3$. For the positive eigenvalue sector, one finds for $K=2k+p$
\begin{subequations}
\label{Part-1}
\begin{multline}
\mathcal{F}^{(+)}_{K}(\phi)=\zeta_{K}^{(+)} \Bigg\{\left[\frac{k+1}{k+\mu_1+\mu_2+1}\right]^{p/2}\,P_{k+p}^{(\mu_2-1/2,\mu_1-1/2)}(\cos 2\phi)
\\
-(-1)^{p}\left[\frac{k+\mu_1+\mu_2+1}{k+1}\right]^{p/2}\;\cos\phi\sin\phi\;P_{k+p-1}^{(\mu_2+1/2,\mu_1+1/2)}(\cos 2\phi)\Bigg\},
\end{multline}
where $p=0,1$. For the negative eigenvalue sector, the result for $K=2k+p$ is
\begin{multline}
\mathcal{F}^{(-)}_{K}(\phi)=\zeta_{K}^{(-)} \Bigg\{\left[\frac{k+\mu_1+1/2}{k+\mu_2+1/2}\right]^{p/2}\,\sin \phi\,P_{k}^{(\mu_2+1/2,\mu_1-1/2)}(\cos 2\phi)
\\
+(-1)^{p}\left[\frac{k+\mu_2+1/2}{k+\mu_1+1/2}\right]^{p/2}\;\cos\phi\;P_{k}^{(\mu_2-1/2,\mu_1+1/2)}(\cos 2\phi)\Bigg\}.
\end{multline}
\end{subequations}
The normalization factors are
\begin{subequations}
\begin{align*}
\zeta_{K}^{(+)}&=\sqrt{\frac{(k+p)!\,\Gamma(k+\mu_1+\mu_2+1+p)}{2\,\Gamma(k+\mu_1+1/2+p)\Gamma(k+\mu_2+1/2+p)}},
\\
\zeta_{K}^{(-)}&=\sqrt{\frac{k!\,\Gamma(k+\mu_1+\mu_2+1)}{2\,\Gamma(k+\mu_1+1/2)\Gamma(k+\mu_2+1/2)}},
\end{align*}
\end{subequations}
Using \eqref{Part-1} the remaining equations \eqref{A-1}, \eqref{A-2} can be solved. When $N=2n$ and $K=2k+p$, one finds
\begin{subequations}
\label{Part-2}
\begin{multline}
\label{N-even}
\mathcal{Y}_{K}^{N}(\theta,\phi)=\sqrt{\frac{(n-k-p)!\,\Gamma(n+k+\mu_1+\mu_2+\mu_3+3/2)}{\Gamma(n+k+\mu_1+\mu_2+1)\Gamma(n-k+\mu_3+1/2-p)}}\;\times
\\
\Bigg\{\left[\frac{n-k+\mu_3-1/2}{n+k+\mu_1+\mu_2+1}\right]^{p/2}\,\sin^{2k+2p}\theta\,P_{n-k-p}^{(2k+2p+\mu_1+\mu_2,\mu_3-1/2)}(\cos 2\theta)\,\mathcal{F}^{(+)}_{K}(\phi)
\\
+\left[\frac{n+k+\mu_1+\mu_2+1}{n-k+\mu_3-1/2}\right]^{p/2}\,\cos\theta\sin^{2k+1}\theta\,P_{n-k-1}^{(2k+1+\mu_1+\mu_2,\mu_3+1/2)}(\cos 2\theta)\,\mathcal{F}^{(-)}_{K}(\phi)\Bigg\}.
\end{multline}
When $N=2n+1$ and $K=2k+p$, the result is
\begin{multline}
\label{N-odd}
\mathcal{Y}_{K}^{N}(\theta,\phi)=(-1)^{K}\sqrt{\frac{(n-k)!\Gamma(n+k+\mu_1+\mu_2+\mu_3+3/2+p)}{\Gamma(n-k+\mu_3+1/2)\Gamma(n+k+\mu_1+\mu_2+1+p)}}\;\times
\\
 \Bigg\{\left[\frac{n+k+\mu_1+\mu_2+1}{n-k+\mu_3+1/2}\right]^{(1-p)/2} \hspace{-.99cm} \cos \theta \sin^{2k+2p}\theta\;P_{n-k-p}^{(2k+2p+\mu_1+\mu_2,\mu_3+1/2)}(\cos 2\theta)\,\mathcal{F}^{(+)}_{K}(\phi)
\\
-\left[\frac{n-k+\mu_3+1/2}{n+k+\mu_1+\mu_2+1}\right]^{(1-p)/2} \hspace{-.5cm}\sin^{2k+1}\theta\;P_{n-k}^{(2k+1+\mu_1+\mu_2,\mu_3-1/2)}(\cos 2\theta)\,\mathcal{F}^{(-)}_K(\phi)\Bigg\}.
\end{multline}
\end{subequations}
The solutions to \eqref{System} can be expressed as linear combinations of the Dunkl spherical harmonics \eqref{Dunkl-Harmonics}. For $N=2n$, straightforward calculations lead to the expressions
\begin{align*}
&\mathcal{Y}_{2k}^{N}(\theta,\phi)=
\sqrt{\frac{n+k+\mu_1+\mu_2+\mu_3+1/2}{2n+\mu_1+\mu_2+\mu_3+1/2}}
\Bigg\{
\sqrt{\frac{k+\mu_1+\mu_2}{2k+\mu_1+\mu_2}}Y_{2k;N}^{(0,0,0)}(\theta,\phi)
\\
&-\sqrt{\frac{k}{2k+\mu_1+\mu_2}}Y_{2k;N}^{(1,1,0)}(\theta,\phi)
\Bigg\}
+
\sqrt{\frac{n-k}{2n+\mu_1+\mu_2+\mu_3+1/2}}\quad \times
\\
&\Bigg\{
\sqrt{\frac{k+\mu_2+1/2}{2k+\mu_1+\mu_2+1}}Y_{2k+1;N}^{(0,1,1)}(\theta,\phi)
+
\sqrt{\frac{k+\mu_1+1/2}{2k+\mu_1+\mu_2+1}}Y_{2k+1;N}^{(1,0,1)}(\theta,\phi)\Bigg\},
\\[0.3cm]
&\mathcal{Y}^{N}_{2k+1}(\theta,\phi)=\sqrt{\frac{n-k+\mu_3-1/2}{2n+\mu_1+\mu_2+\mu_3+1/2}}
\Bigg\{\sqrt{\frac{k+1}{2k+\mu_1+\mu_2+2}} Y_{2k+2;N}^{(0,0,0)}(\theta,\phi)
\\
&+\sqrt{\frac{k+\mu_1+\mu_2+1}{2k+\mu_1+\mu_2+2}}Y_{2k+2;N}^{(0,1,1)}(\theta,\phi)\Bigg\}
+\sqrt{\frac{n+k+\mu_1+\mu_2+1}{2n+\mu_1+\mu_2+\mu_3+1/2}}\quad \times
\\
&\Bigg\{
\sqrt{\frac{k+\mu_1+1/2}{2k+\mu_1+\mu_2+1}}Y_{2k+1;N}^{(0,1,1)}(\theta,\phi)-\sqrt{\frac{k+\mu_2+1/2}{2k+\mu_1+\mu_2+1}}Y_{2k+1;N}^{(1,0,1)}(\theta,\phi)\Bigg\},
\end{align*}
For $N=2n+1$, one finds
\begin{align*}
&\mathcal{Y}_{2k}^{N}(\theta,\phi)=\sqrt{\frac{k+n+\mu_1+\mu_2+1}{2n+\mu_1+\mu_2+\mu_3+3/2}}\Bigg\{
\sqrt{\frac{k+\mu_1+\mu_2}{2k+\mu_1+\mu_2}}Y_{2k;N}^{(0,0,1)}(\theta,\phi)
\\
&-\sqrt{\frac{k}{2k+\mu_1+\mu_2}}Y_{2k;N}^{(1,1,1)}(\theta,\phi)\Bigg\}-\sqrt{\frac{n-k+\mu_3+1/2}{2n+\mu_1+\mu_2+\mu_3+3/2}}\quad \times
\\
&
\Bigg\{
\sqrt{\frac{k+\mu_2+1/2}{2k+\mu_1+\mu_2+1}}Y_{2k+1;N}^{(0,1,0)}(\theta,\phi)+\sqrt{\frac{k+\mu_1+1/2}{2k+\mu_1+\mu_2+1}} Y_{2k+1;N}^{(1,0,0)}(\theta,\phi)\Bigg\},
\end{align*}
\begin{align*}
&\mathcal{Y}_{2k+1}^{N}(\theta,\phi)=\sqrt{\frac{n+k+\mu_1+\mu_2+\mu_3+3/2}{2n+\mu_1+\mu_2+\mu_3+3/2}}\Bigg\{\sqrt{\frac{k+\mu_1+1/2}{2k+\mu_1+\mu_2+1}}Y_{2k+1;N}^{(0,1,0)}(\theta,\phi)
\\
&-\sqrt{\frac{k+\mu_2+1/2}{2k+\mu_1+\mu_2+1}}Y_{2k+1;N}^{(1,0,0)}(\theta,\phi)\Bigg\}-\sqrt{\frac{n-k}{2n+\mu_1+\mu_2+\mu_3+3/2}}\quad \times
\\
&\Bigg\{
\sqrt{\frac{k+1}{2k+\mu_1+\mu_2+2}}Y_{2k+2;N}^{(0,0,1)}(\theta,\phi)+\sqrt{\frac{k+\mu_1+\mu_2+1}{2k+\mu_1+\mu_2+2}}Y_{2k+2;N}^{(1,1,1)}(\theta,\phi)\Bigg\}.
\end{align*}
It follows from the orthogonality relation for the Jacobi polynomials \cite{Koekoek-2010} that
\begin{align}
\label{Ortho-Y}
\int_{0}^{\pi}\int_{0}^{2\pi} \mathcal{Y}_{K}^{N}(\theta,\phi)\mathcal{Y}_{K'}^{N'}(\theta,\phi)\,h(\theta,\phi)\,\sin\theta\;\mathrm{d}\phi\,\mathrm{d}\theta=\delta_{KK'}\delta_{NN'},
\end{align}
where $h(\theta,\phi)$ is given by \eqref{weight}.
\begin{proposition}
The functions $\mathcal{Y}_{K}^{N}(\theta,\phi)$ defined by \eqref{Part-1}, \eqref{Part-2} realize the Bannai--Ito modules of Proposition 4. That is, if one takes $\psi_{K}=\mathcal{Y}^{N}_{K}(\theta,\phi)$, the generators \eqref{K-Def} expressed in spherical coordinates have the actions \eqref{Actions-4}.
\end{proposition}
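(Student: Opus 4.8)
The plan is to verify that the explicitly constructed functions $\mathcal{Y}_{K}^{N}(\theta,\phi)$ satisfy the three actions \eqref{Actions-4} of Proposition 4, thereby identifying them with the abstract basis vectors $\psi_K$. By construction, the functions are defined as simultaneous solutions of the system \eqref{System}, so the action \eqref{C-1} of $K_3$ is immediate: one reads off that $K_3 \mathcal{Y}_K^N = \omega_K \mathcal{Y}_K^N$ with $\omega_K = (-1)^K(K+\mu_1+\mu_2+1/2)$, matching the diagonal action $K_3\psi_k = \omega_k\psi_k$. Thus the entire content of the proposition reduces to showing that the two off-diagonal generators $K_1$ and $K_2$, when expressed in spherical coordinates via \eqref{BI-Generators} and \eqref{C}, act as the tridiagonal operators \eqref{Actions-4} with the \emph{specific} coefficients $U_k = \sqrt{B_{k-1}D_k}$ and $V_k = \mu_2+\mu_3+1/2 - B_k - D_k$.

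First I would obtain the spherical-coordinate expression for $K_1$ (and $K_2$) analogous to the one already displayed for $K_3$, by substituting the Dunkl operators \eqref{Dunkl-D} and the spherical parametrization \eqref{Spherical-Coordinates} into \eqref{BI-Generators}. Because $K_1$ mixes the $\theta$ and $\phi$ dependence, its action on $\mathcal{Y}_K^N$ will produce combinations of Jacobi polynomials whose degrees and parameters are shifted relative to those in $\mathcal{Y}_K^N$. The key technical engine is then a set of contiguity and three-term recurrence relations for the Jacobi polynomials $P_m^{(\alpha,\beta)}$ appearing in \eqref{Part-1} and \eqref{Part-2}: the differentiation and multiplication-by-$\cos 2\phi$ (resp. $\cos 2\theta$) operations must be resolved into a linear combination of the basis functions $\mathcal{Y}_{K-1}^N, \mathcal{Y}_K^N, \mathcal{Y}_{K+1}^N$. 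I would exploit the $\phi$--$\theta$ separated structure of \eqref{Part-2}: apply $K_1$ to each separated factor, use the standard recurrence/contiguity identities from \cite{Koekoek-2010} to re-express the result, and collect the terms belonging to each value of the index $K$.

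A cleaner alternative route — which I would run in parallel as a consistency check — avoids direct manipulation of the Jacobi polynomials altogether. Since Proposition 4 already guarantees that \emph{any} irreducible $(N+1)$-dimensional Bannai--Ito module with the prescribed structure constants has the actions \eqref{Actions-4} in the eigenbasis of $K_3$, and since the $\mathcal{Y}_K^N$ are by \eqref{System} simultaneous eigenfunctions of $\Omega$ (equivalently $\widetilde{C}$, fixing the structure constants via \eqref{Choupette}, \eqref{rea}) and of $K_3$ (fixing the eigenvalue $\omega_K$), it suffices to establish that the $\mathcal{Y}_K^N$ for fixed $N$ span an irreducible module. Irreducibility follows because $U_k \neq 0$ for $\mu_i > -1/2$, exactly as in the proof of Proposition 4. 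The abstract uniqueness of the tridiagonal representation then forces the matrix elements of $K_1$ in the orthonormal basis $\{\mathcal{Y}_K^N\}$ (orthonormality being guaranteed by \eqref{Ortho-Y}) to coincide with $U_k, V_k$ up to the phase freedom $\psi_k \mapsto \pm\psi_k$, which the normalization conventions in \eqref{Part-1}, \eqref{Part-2} have been chosen to fix. The main obstacle is the bookkeeping in the direct verification: correctly tracking the four-way branching on the parities of $N$ and $K$ (the cases $N=2n$ versus $N=2n+1$ and $p=0$ versus $p=1$) and confirming that the Jacobi-polynomial recurrences reproduce precisely the square-root coefficients $\sqrt{B_{k-1}D_k}$ rather than merely a tridiagonal form. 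I expect the abstract argument to dispatch the structural claim quickly, leaving only the verification that the chosen normalizations $\eta, \zeta^{(\pm)}$ yield the off-diagonal entries with the correct signs and magnitudes.
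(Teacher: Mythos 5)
Your ``cleaner alternative route'' is in fact the paper's entire proof: the authors observe that the $\mathcal{Y}_{K}^{N}$ solve \eqref{System} --- the $\Omega$ and $R_1R_2R_3$ eigenvalues fix the total Casimir, hence the structure constants \eqref{rea}, and \eqref{C-1} fixes the simple $K_3$ spectrum --- so by the uniqueness analysis behind Proposition 4 the tridiagonal matrix elements are forced, and ``one needs only to check for possible phase factors,'' which they do by inspecting the highest-order term in $K_1\mathcal{Y}_{K}^{N}$. Your concluding expectation therefore matches the paper exactly, including the correct identification of the residual content (that the normalizations $\eta$, $\zeta^{(\pm)}$ give $Z_{k+1,k}=+U_{k+1}$ rather than $-U_{k+1}$). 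Your primary route --- direct verification through Jacobi contiguity and three-term recurrences with the four-way parity branching --- is a genuinely different and far heavier path that the paper deliberately avoids; it would buy an independent confirmation of the coefficients but is superfluous once the abstract uniqueness is invoked. Two refinements would tighten the abstract argument: first, state explicitly that the span of $\{\mathcal{Y}_{K}^{N}\}_{K=0}^{N}$ is invariant under $K_1$, $K_2$ (they commute with $\Omega$ and $R_1R_2R_3$, so they preserve the joint eigenspace) and that $K_1$ is symmetric with respect to the inner product underlying \eqref{Ortho-Y}, so its matrix is real symmetric tridiagonal with $Z_{k+1,k}^2=U_{k+1}^2$; second, your irreducibility step is logically inverted --- you cite $U_k\neq 0$ before the matrix elements are known --- whereas the correct order is that the relations force the tridiagonal entries (up to sign) from the $K_3$ spectrum alone, and $U_k\neq 0$ \emph{then} yields irreducibility. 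Note finally that once the signs in $K_1$ are pinned down, the $K_2$ action in \eqref{Actions-4} follows from the algebra relations, so no separate check on $K_2$ is needed.
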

\begin{proof}
The result follows from the fact that the $\mathcal{Y}_{K}^{N}(\theta,\phi)$ are solutions to \eqref{System}. One needs only to check for possible phase factors. A check on the highest order term occurring in $K_1\mathcal{Y}^{N}_{K}(\theta,\phi)$ confirms the phase factors in \eqref{Part-2}.
\end{proof}
\subsection{Bannai--Ito polynomials as overlap coefficients} As is seen from \eqref{System}, the simultaneous diagonalization of $\Omega$, $R_1R_2R_3$ and $K_3$ is associated to the separation of variables of the basis functions $\mathcal{Y}^{N}_{K}(\theta,\phi)$ in the usual spherical coordinates
\begin{align}
\label{Spherical-Coordinates-1-1}
x_1=\sin \theta \cos \phi,\quad x_2=\sin\theta \sin \phi,\quad x_3=\cos\theta.
\end{align}
Consider the basis functions $\mathcal{Z}^{N}_{S}(\vartheta,\varphi)$, $S=0,\ldots,N$, associated to the simultaneous diagonalization of $\Omega$, $R_1R_2R_3$ and $K_1$. The relations \eqref{A-1}, \eqref{A-2} hold and one has
\begin{align}
\label{Ultra-2}
K_1\mathcal{Z}^{N}_{S}(\vartheta,\varphi)=(-1)^{S}(S+\mu_2+\mu_3+1/2)\mathcal{Z}_{S}^{N}(\vartheta,\varphi).
\end{align}
The functions $\mathcal{Z}^{N}_{S}(\vartheta,\varphi)$ separate in the alternative spherical coordinates
\begin{align}
\label{Spherical-Coordinates-2}
x_1=\cos \vartheta,\quad x_2=\sin \vartheta\cos \varphi,\quad x_3=\sin \vartheta\sin \varphi,
\end{align}
as can be seen from the expression of $K_1$ obtained using \eqref{Spherical-Coordinates-2}. Writing $\Omega$ in the coordinates \eqref{Spherical-Coordinates-2} and comparing the expression with the one obtained using the coordinates \eqref{Spherical-Coordinates-1-1}, it is seen that the basis functions $\mathcal{Z}_{S}^{N}(\vartheta,\varphi)$ have the expression
\begin{align*}
\mathcal{Z}_{S}^{N}(\vartheta,\varphi)=
\begin{cases}
\pi \mathcal{Y}_{S}^{N}(\pi-\vartheta,\varphi), & \text{$N$ is even},
\\
\pi \mathcal{Y}_{S}^{N}(\vartheta,\varphi), & \text{$N$ is odd},
\end{cases}
\end{align*}
where $\pi=(123)$ is the permutation applied to the parameters $(\mu_1,\mu_2,\mu_3)$. Since $\{\mathcal{Y}_{K}^{N}(\theta,\phi)\}_{K=0}^{N}$ and $\{\mathcal{Z}_{S}^{N}(\vartheta,\varphi)\}_{S=0}^{N}$ form orthonormal bases for the same space, they are related (at a given point) by a unitary transformation. One hence writes
\begin{align}
\label{Overlaps}
\mathcal{Z}_{S}^{N}(\vartheta,\varphi)=\sum_{K=0}^{N}R^{\mu_1,\mu_2,\mu_3}_{S,K;N}\,\mathcal{Y}_{K}^{N}(\theta,\phi).
\end{align}
Since the coefficients $R^{\mu_1,\mu_2,\mu_3}_{S,K;N}$ are real, their unitarity implies
\begin{align}
\label{Unitarity}
\sum_{S=0}^{N}R^{\mu_1\mu_2\mu_3}_{S,K;N}R^{\mu_1\mu_2\mu_3}_{S,K';N}=\delta_{KK'},\;\;\sum_{K=0}^{N}R^{\mu_1\mu_2\mu_3}_{S,K;N}R^{\mu_1\mu_2\mu_3}_{S',K;N}=\delta_{SS'},
\end{align}
These transition coefficients can be expressed in terms of the Bannai--Ito polynomials \eqref{BI-Recurrence} as follows. Acting with $K_1$ on both sides of \eqref{Overlaps}, using \eqref{Ultra-2} and Proposition 5 and furthermore defining $R^{\mu_1,\mu_2,\mu_3}_{S,K;N}=2^{K}[w_{S;N}]^{1/2}B_{K}(x_{S})$ such that $B_{0}(x_{S})=1$, it seen that $B_{K}(x_{S})$ satisfy the three-term recurrence relation \eqref{BI-Recurrence} of the Bannai--Ito polynomials $B_{K}(x_{S};\rho_1,\rho_2,r_1,r_2)$ with parameters
\begin{align}
\label{Parameters}
\rho_1=\frac{\mu_2+\mu_3}{2},\;\rho_2=\frac{\mu_1+\mu}{2},\; r_1=\frac{\mu_3-\mu_2}{2},\;r_2=\frac{\mu-\mu_1}{2}.
\end{align}
with $\mu$ given by \eqref{Mu} and where the variable $x_{S}$ is given by
\begin{align}
\label{Variable}
x_{S}=\frac{1}{2}\left[(-1)^{S}(S+\mu_2+\mu_3+1/2)-1/2\right].
\end{align}
The coefficients $R^{\mu_1\mu_2\mu_3}_{S,K;N}$ coincide with the Racah coefficients of $sl_{-1}(2)$ \cite{Genest-2012}. Combining \eqref{Unitarity} with the orthogonality relation of the BI polynomial \cite{Tsujimoto-2012-03}, one finds
\begin{align}
\label{Racah-Coef}
R^{\mu_1\mu_2\mu_3}_{S,K;N}=\sqrt{\frac{w_{S;N}}{u_1u_{2}\cdots u_{K}}}B_{K}(x_{S};\rho_1,\rho_2,r_1,r_2).
\end{align}
with \eqref{Parameters}, \eqref{Variable}, where $u_{n}=A_{n-1}C_{n}$ with $A_{n}$, $C_{n}$ as in \eqref{Recu-Coeff}, and where $w_{S;N}$ is of the form
\begin{align*}
w_{S;N}=\frac{1}{h_{N}}\frac{(-1)^{\nu}(\rho_1-r_1+1/2;\rho_1-r_2+1/2)_{\ell+\nu}(\rho_1+\rho_2+1;2\rho_1+1)_{\ell}}{(\rho_1+r_1+1/2;\rho_1+r_2+1/2)_{\ell+\nu}(1;\rho_1-\rho_2+1)_{\ell}},
\end{align*}
with $S=2\ell+\nu$, $\nu=\{0,1\}$ and 
\begin{align}
(a_1;a_2;\ldots;a_{k})_{n}=(a_1)_{n}(a_2)_{n}\cdots (a_k)_{n},
\end{align}
where $(a)_n=a(a+1)\cdots(a+n-1)$. The normalization factor $h_{N}$ is given by
\begin{align*}
h_{N}=
\begin{cases}
\frac{(2\rho_1+1;r_1-\rho_2+1/2)_{N/2}}{(\rho_1-\rho_2+1;\rho_1+r_1+1/2)_{N/2}}, & \text{$N$ even},
\\
\frac{(2\rho_1+1;r_1+r_2)_{(N+1)/2}}{(\rho_1+r_1+1/2;\rho_1+r_2+1/2)_{(N+1)/2}}, & \text{$N$ odd}.
\end{cases}
\end{align*}
Using the orthogonality relation \eqref{Ortho-Y} satisfied by the basis functions $\mathcal{Y}_{K}^{N}(\theta,\phi)$ on the decomposition formula \eqref{Overlaps}, one finds that
\begin{align*}
R^{\mu_1\mu_2\mu_3}_{S,K;N}=\int_{0}^{\pi}\int_{0}^{2\pi}\mathcal{Y}_{K}^{N}(\theta,\phi)\,\mathcal{Z}_{S}^{N}(\vartheta,\varphi)\,h(\theta,\phi)\,\sin\theta\,\mathrm{d}\phi\,\mathrm{d}\theta,
\end{align*}
which in light of \eqref{Racah-Coef} gives an integral formula for the Bannai--Ito polynomials.
\section{Conclusion}
We have established in this paper the algebraic basis for the harmonic analysis on $S^2$ associated to a $\mathbb{Z}_2^{3}$ Dunkl Laplacian $\Delta_{S}^2$. The commutant of $\Delta_{S^2}$ was determined in the framework of the Racah problem for $sl_{-1}(2)$ and identified with a central extension of the Bannai--Ito algebra. Two bases for the unitary irreducible representations of this algebra on $L^2{(S^2)}$ were explicitly constructed in terms of the Dunkl spherical harmonics with the Bannai--Ito orthogonal polynomials arising in their overlaps.

Since the Dunkl operators and Laplacian can be defined for an arbitrary number of variables, it would be natural to look for the extension of the results presented here to spheres in higher dimensions. It would be also of interest to examine the situation on hyperboloids. We plan to pursue the study of these questions in the future.

\section*{Acknowledgements}
\noindent
V.X.G. holds a fellowship from the Natural Sciences and Engineering Research Council of Canada (NSERC). The research of L.V. is supported in part by NSERC.

%    Bibliographies can be prepared with BibTeX using amsplain,
%    amsalpha, or (for "historical" overviews) natbib style.
\bibliographystyle{amsplain}
%\bibliography{/home/vxg/Documents/References/References_VXG}
\providecommand{\bysame}{\leavevmode\hbox to3em{\hrulefill}\thinspace}
\providecommand{\MR}{\relax\ifhmode\unskip\space\fi MR }
% \MRhref is called by the amsart/book/proc definition of \MR.
\providecommand{\MRhref}[2]{%
  \href{http://www.ams.org/mathscinet-getitem?mr=#1}{#2}
}
\providecommand{\href}[2]{#2}

\end{document}